\documentclass[a4paper,12pt]{amsart}
\usepackage[margin=3cm]{geometry}
\parindent=0cm
\newtheorem{theorem}{Theorem}[section]
\newtheorem{rem} [theorem] {Remark}
\newtheorem{prop} [theorem]{Proposition}
\newtheorem{lemma}[theorem]{Lemma}
\newtheorem{definition}[theorem]{Definition}
\newcommand{\ovprt}{\overline{\partial}}
\newcommand{\ovli}{\overline}
\newcommand{\dquer}{\overline\partial}
\newcommand{\dquers}{\overline\partial ^*_\varphi}
\newcommand{\boxphi}{\square_\varphi}
\newcommand{\levim}{\frac{\partial^2\varphi}{\partial z_j\partial\overline z_k}}

\pagestyle{plain}
\numberwithin{equation}{section}
\begin{document}
\title{Compactness estimates for  the  $\ovprt $ - Neumann problem  in weighted $L^2$ - spaces.}

\author{Klaus Gansberger and Friedrich Haslinger\\
 \  \\
 {\it {\tiny Dedicated to Linda Rothschild}}}

\thanks{Partially supported by the FWF-grant  P19147.}

 \address{K. Gansberger, F. Haslinger: Institut f\"ur Mathematik, Universit\"at Wien,
Nordbergstrasse 15, A-1090 Wien, Austria}
\email{klaus.gansberger@univie.ac.at , friedrich.haslinger@univie.ac.at}
\keywords{$\ovprt $-Neumann problem, Sobolev spaces, compactness}
\subjclass[2000]{Primary 32W05; Secondary 32A36, 35J10}

\maketitle


\begin{abstract} ~\\
In this paper we discuss compactness estimates for the $\ovprt $-Neumann problem in the setting of weighted $L^2$-spaces on $\mathbb{C}^n.$
For this purpose we use  a version of the Rellich - Lemma for weighted Sobolev spaces. 
\end{abstract}

\section{Introduction.}~\\

Let $\Omega $ be a bounded pseudoconvex domain in $\mathbb C^n.$ 
We consider the 
$\ovprt $-complex 
$$
L^2(\Omega )\overset{\ovprt }\longrightarrow L^2_{(0,1)}(\Omega)
\overset{\ovprt }\longrightarrow \dots \overset{\ovprt }\longrightarrow
L^2_{(0,n)}(\Omega)\overset{\ovprt }\longrightarrow 0\, ,  
$$
where $L^2_{(0,q)}(\Omega)$ denotes the space of $(0,q)$-forms on $\Omega$ with
coefficients in $L^2(\Omega).$ The $\ovprt $-operator on $(0,q)$-forms is given by
$$\ovprt \left ( \sum_J\,^{'}  a_J \, d\ovli z_J \right )=
\sum_{j=1}^n \sum_J\,^{'}\  \frac{\partial a_J}{\partial \ovli z_j}d\ovli z_j\wedge
d\ovli z_J,$$
where $\sum  ^{'} $ means that the sum is only taken over strictly increasing multi-indices $J.$

The derivatives are taken in the sense of distributions, and the domain of $\ovprt $
consists of those $(0,q)$-forms for which the right hand side belongs to
$L^2_{(0,q+1)}(\Omega).$ So $\ovprt $ is a densely defined closed operator, and
therefore has an adjoint operator from $L^2_{(0,q+1)}(\Omega)$ into
$L^2_{(0,q)}(\Omega)$ denoted by $\ovprt ^* .$

The complex Laplacian 
$\Box = \ovprt \, \ovprt ^* + \ovprt ^* \, \ovprt $
acts as
an unbounded selfadjoint  operator on 
$L^2_{(0,q)}(\Omega ),\ 1\le q \le n,$
it is surjective and therefore
has a continuous inverse, the $\ovprt $-Neumann operator $N_q.$ If $v$ is a $\ovprt $-closed
$(0,q+1)$-form, then $u=\ovprt ^* \, N_{q+1}v$ provides the canonical solution to 
$\ovprt u = v,$ namely the one orthogonal to the kernel of $\ovprt$ and
so the one with minimal norm (see for instance \cite{ChSh}).

\vskip 0.5 cm

A survey of the $L^{2}$-Sobolev theory
of the $\overline{\partial}$-Neumann problem is given in \cite{BS}.
\vskip 0.5 cm

The question of compactness of $N_q$ is of interest for various
reasons. For example, compactness of $N_q$ implies global regularity
in the sense of preservation of Sobolev spaces \cite{KN}. Also, the
Fredholm theory of Toeplitz operators is an immediate consequence
of compactness in the $\overline{\partial}$-Neumann problem 
\cite{HI}, \cite{CD}. There are additional ramifications for certain
$C^{*}$-algebras naturally associated to a domain in $\mathbb{C}^{n}$
\cite{SSU}. Finally, compactness is a more robust property than
global regularity - for example, it localizes, whereas global
regularity does not - and it is generally believed to be more
tractable than global regularity. 

A thorough discussion of
compactness in the $\overline{\partial}$-Neu\-mann problem can be found
in \cite{FS1} and \cite{FS2}.

The study of the $\overline{\partial}$-Neumann problem is essentially
equivalent  to the study of the
canonical solution operator to $\overline{\partial}$:

The $\ovprt $-Neumann operator $N_q$ is compact from $L^2_{(0,q)}(\Omega )$ to itself if and only if the canonical solution operators
$$\ovprt ^*N_q: L^2_{(0,q)}(\Omega )\longrightarrow L^2_{(0,q-1)}(\Omega )   \ \ \textrm{and} \ \
\ovprt ^*N_{q+1}: L^2_{(0,q+1)}(\Omega )\longrightarrow L^2_{(0,q)}(\Omega )$$
are compact.

Not very much is known in the case of unbounded domains.
 In this paper we continue the investigations of \cite{HaHe} concerning  existence and compactness of the canonical solution operator to $\ovprt $ on weighted
$L^2$-spaces over $\mathbb C^n,$ 
where we applied ideas which were used in the spectral analysis of the Witten Laplacian in the real case, see \cite{HeNi}.

Let $\varphi : \mathbb C^n \longrightarrow \mathbb R^+ $ be a plurisubharmonic $\mathcal C^2$-weight function and define the space
$$L^2(\mathbb C^n , \varphi )=\{ f:\mathbb C^n \longrightarrow \mathbb C \ : \ \int_{\mathbb C^n}
|f|^2\, e^{-\varphi}\,d\lambda < \infty \},$$
where $\lambda$ denotes the Lebesgue measure, the space $L^2_{(0,1)}(\mathbb C^n, \varphi )$ of $(0,1)$-forms with coefficients in
$L^2(\mathbb C^n , \varphi )$ and the space $L^2_{(0,2)}(\mathbb C^n, \varphi )$ of $(0,2)$-forms with coefficients in
$L^2(\mathbb C^n , \varphi ).$
Let 
$$\langle f,g\rangle_\varphi=\int_{\mathbb{C}^n}f \,\overline{g} e^{-\varphi}\,d\lambda$$
denote the inner product and 
$$\| f\|^2_\varphi =\int_{\mathbb{C}^n}|f|^2e^{-\varphi}\,d\lambda $$
the norm in $L^2(\mathbb C^n , \varphi ).$

We consider the weighted
$\ovprt $-complex 
$$
L^2(\mathbb C^n , \varphi )\underset{\underset{\ovprt_\varphi^* }
\longleftarrow}{\overset{\ovprt }
{\longrightarrow}} L^2_{(0,1)}(\mathbb C^n , \varphi )
\underset{\underset{\ovprt_\varphi^* }
\longleftarrow}{\overset{\ovprt }
{\longrightarrow}} L^2_{(0,2)}(\mathbb C^n , \varphi ),
$$
where $\ovprt_\varphi^*$ is the adjoint operator to $\ovprt $ with respect to the weighted inner product. For $u=\sum_{j=1}^nu_jd\overline z_j\in {\text {dom}}(\ovprt_\varphi^*)$ one has
$$\ovprt_\varphi^*u=-\sum_{j=1}^n \left ( \frac{\partial}{\partial z_j}-
\frac{\partial \varphi}{\partial z_j}\right )u_{j}.$$
The complex Laplacian on $(0,1)$-forms is defined as
$$\boxphi := \dquer  \,\dquers + \dquers \dquer,$$
where the symbol $\boxphi $ is to be understood as the maximal closure of the operator initially defined on forms with coefficients in $\mathcal{C}_0^\infty$, i.e., the space of smooth functions with compact support.

$\boxphi $ is a selfadjoint and positive operator, which means that 
$$\langle \boxphi f,f\rangle_\varphi \ge 0 \ , \   {\text{for}} \  f\in dom (\boxphi ).$$
The associated Dirichlet form is denoted by 
$$Q_\varphi (f,g)= \langle \dquer f,\dquer g\rangle_\varphi + \langle \dquers f ,\dquers g\rangle_\varphi, $$
for $f,g\in dom (\dquer ) \cap dom (\dquers ).$ The weighted $\dquer $-Neumann operator
$N_\varphi $ is - if it exists - the bounded inverse of $\boxphi .$ 
\vskip 0.5 cm
There is an interesting  connection between $\ovprt $ and the theory of Schr\"odinger operators with magnetic fields,
see for example \cite{Ch}, \cite{B}, \cite{FS3} and \cite{ChF} for recent contributions exploiting this point of view.

Here we use a Rellich - Lemma for weigthed Sobolev spaces to establish compactness estimates for the $\dquer $-Neumann operator $N_\varphi$ on $L^2_{(0,1)}(\mathbb{C}^n, \varphi)$ and we use this to
give a new proof of the main result of \cite{HaHe} without  spectral theory of Schr\"odinger operators.
\vskip 1 cm

\section{Weighted basic estimates.}~\\
In the weighted space $L^2_{(0,1)}(\mathbb C^n, \varphi)$ we can give a simple characterization of dom\,($\dquers$):

\begin{prop}
\label{dom dquers}
Let $f=\sum f_jd\overline z_j\in L^2_{(0,1)}(\mathbb C^n, \varphi)$. Then $f\in dom(\dquers)$ if and only if 
\begin{equation*}
 \sum_{j=1}^n\left ( \frac{\partial f_j}{\partial z_j}- \frac{\partial \varphi}{\partial z_j}\, f_j \right )
 \in L^2(\mathbb{C}^n, \varphi ).
\end{equation*}
\end{prop}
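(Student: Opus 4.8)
The plan is to characterize $\mathrm{dom}(\dquers)$ by testing against a dense set and integrating by parts. First I would recall that $\dquers$ is the Hilbert-space adjoint of $\dquer\colon L^2(\mathbb C^n,\varphi)\to L^2_{(0,1)}(\mathbb C^n,\varphi)$, so by definition $f=\sum f_j\,d\ovli z_j$ lies in $\mathrm{dom}(\dquers)$ precisely when the linear functional $g\mapsto \langle \dquer g, f\rangle_\varphi$ is bounded on $\mathrm{dom}(\dquer)$ with respect to $\|\cdot\|_\varphi$. Writing this out, $\langle \dquer g, f\rangle_\varphi = \sum_{j=1}^n \int_{\mathbb C^n} \frac{\partial g}{\partial \ovli z_j}\,\ovli{f_j}\,e^{-\varphi}\,d\lambda$, and for $g\in\mathcal C_0^\infty$ an integration by parts in $\ovli z_j$ moves the derivative onto $\ovli{f_j}e^{-\varphi}$, producing $-\sum_j\int g\,\ovli{\bigl(\frac{\partial f_j}{\partial z_j}-\frac{\partial\varphi}{\partial z_j}f_j\bigr)}\,e^{-\varphi}\,d\lambda$. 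This identity shows that the quantity $\vartheta f := -\sum_j\bigl(\frac{\partial f_j}{\partial z_j}-\frac{\partial\varphi}{\partial z_j}f_j\bigr)$, a priori a distribution, always represents this functional; hence boundedness of the functional is equivalent to $\vartheta f\in L^2(\mathbb C^n,\varphi)$, which is the asserted condition, and in that case $\dquers f=\vartheta f$.

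The key steps, in order, are: (i) reduce the definition of $\mathrm{dom}(\dquers)$ to the boundedness of $g\mapsto\langle\dquer g,f\rangle_\varphi$ on a core for $\dquer$; (ii) justify that $\mathcal C_0^\infty(\mathbb C^n)$ is such a core, i.e. dense in $\mathrm{dom}(\dquer)$ in the graph norm — this uses that $\varphi$ is locally bounded so that cutting off and mollifying converges, and that on $\mathbb C^n$ (no boundary) there is no boundary condition to worry about; (iii) perform the integration by parts for $g\in\mathcal C_0^\infty$ to identify the distributional expression $\vartheta f$; (iv) observe that if $\vartheta f\in L^2(\mathbb C^n,\varphi)$ then Cauchy--Schwarz gives boundedness of the functional, while conversely boundedness together with the Riesz representation theorem and the density from (ii) forces $\vartheta f$ to coincide with an $L^2(\mathbb C^n,\varphi)$ element.

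I expect the main obstacle to be step (ii): showing that compactly supported smooth functions form a core for the maximally defined $\dquer$ on the weighted space, so that testing the functional only against such $g$ loses nothing. The subtlety is that an arbitrary $g\in\mathrm{dom}(\dquer)$ need not have compact support, so one must approximate by $\chi_R g$ with $\chi_R$ a cutoff and control the error term $(\dquer\chi_R)\,g$ in $L^2_{(0,1)}(\mathbb C^n,\varphi)$; choosing $\chi_R$ with $|\nabla\chi_R|\to 0$ uniformly handles this, and a subsequent mollification (legitimate since $e^{-\varphi}$ is locally comparable to Lebesgue measure) yields smoothness. Once this density is in hand, the rest is the formal integration-by-parts computation, which is routine. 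I would also remark that the same computation re-derives the stated formula $\dquers u=-\sum_j(\partial_{z_j}-\partial_{z_j}\varphi)u_j$ on $\mathrm{dom}(\dquers)$.
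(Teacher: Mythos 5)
Your proposal is correct and follows essentially the same route as the paper: test the functional $g\mapsto\langle\dquer g,f\rangle_\varphi$ against $g\in\mathcal C_0^\infty(\mathbb C^n)$, integrate by parts to identify the distributional expression $-e^{\varphi}\sum_j\partial_{z_j}(f_je^{-\varphi})$, and use density of $\mathcal C_0^\infty$ in the graph norm of $\dquer$ (obtained by cutoffs with uniformly bounded gradients plus Friedrichs mollification) to pass between boundedness of the functional and membership of that expression in $L^2(\mathbb C^n,\varphi)$. The only cosmetic difference is that the paper inserts the cutoffs $\chi_R$ on $f$ before integrating by parts and then lets $R\to\infty$, whereas you treat the pairing directly as a distributional one; both handle the same technical point you correctly flagged.
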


\begin{proof}
Suppose first that $\sum_{j=1}^n\left ( \frac{\partial f_j}{\partial z_j}- \frac{\partial \varphi}{\partial z_j}\, f_j \right )
 \in L^2(\mathbb{C}^n, \varphi ),$ which equivalently means that $e^{\varphi }\sum_{j=1}^n\frac{\partial}{\partial z_j}\left( f_je^{-\varphi}\right) \in L^2(\mathbb{C}^n, \varphi )$. We have to show that there exists  a constant $C$ such that $|\langle\dquer g,f\rangle_{\varphi}|\leq C\Vert g\Vert_\varphi$ for all $g\in dom(\dquer)$. To this end let $(\chi_R)_{R\in\mathbb N}$ be a family of radially symmetric smooth cutoff funtions, which are identically one on $\mathbb B_R$, the ball with radius $R$, such that the support of $\chi _R$ is contained in $\mathbb B_{R+1}$,  $supp(\chi_R)\subset \mathbb B_{R+1}$, and such that furthermore all first order derivatives of all functions in this family are uniformly bounded by a constant $M$. Then for all $g\in \mathcal{C}_0^\infty (\mathbb{C}^n)$: 
\begin{equation*}
\langle\dquer g, \chi _Rf\rangle_\varphi=\sum_{j=1}^n\langle\frac{\partial g}{\partial\overline z_j}, \chi _Rf_j\rangle_\varphi=-\int\sum_{j=1}^n g\frac{\partial}{\partial \overline z_j}\left(\chi_R \overline f_je^{-\varphi}\right)d\lambda,
\end{equation*}
by integration by parts, which in particular means
\begin{equation*}
|\langle\dquer g,f\rangle_\varphi|=\lim_{R\to\infty}|\langle\dquer g, \chi _Rf\rangle_\varphi |
=\lim_{R\to\infty}\left|\int_{\mathbb{C}^n}\sum_{j=1}^n g\frac{\partial}{\partial \overline z_j}\left(\chi_R \overline f_je^{-\varphi}\right)\, d\lambda\right|.
\end{equation*}
 Now we use the triangle inequality, afterwards Cauchy -- Schwarz, to get
\begin{align*}
&\lim_{R\to\infty}\left|\int_{\mathbb{C}^n}\sum_{j=1}^n g\frac{\partial}{\partial \overline z_j}\left(\chi_R \overline f_je^{-\varphi}\right)\,d\lambda\right| \\
\leq&\lim_{R\to\infty}\left|\int_{\mathbb{C}^n}\chi_R \, g\sum _{j=1}^n\frac{\partial}{\partial\overline z_j}\left(\overline f_je^{-\varphi}\right)\,d\lambda\right|+\lim_{R\to\infty}\left|\int_{\mathbb{C}^n} \sum _{j=1}^n\overline f_jg\frac{\partial\chi_R}{\partial\overline z_j}e^{-\varphi}\,d \lambda\right|\\
\leq&\lim_{R\to\infty}\Vert \chi_R\, g\Vert_\varphi\left\Vert e^{\varphi}\sum_{j=1}^n\frac{\partial}{\partial z_j}\left( f_je^{-\varphi}\right)\right\Vert_\varphi+M\Vert g\Vert_\varphi\Vert f\Vert_\varphi\\
=&\Vert g\Vert_\varphi\left\Vert e^\varphi\sum_{j=1}^n\frac{\partial}{\partial z_j}\left( f_je^{-\varphi}\right)\right\Vert_\varphi+M\Vert g\Vert_\varphi\Vert f\Vert_\varphi.
\end{align*}
Hence by assumption,
\begin{equation*}
|\langle\dquer g,f\rangle_\varphi|\le\Vert g\Vert_\varphi\left\Vert e^\varphi\sum_{j=1}^n\frac{\partial}{\partial z_j}\left( f_je^{-\varphi}\right)\right\Vert_\varphi+M\Vert g\Vert_\varphi\Vert f\Vert_\varphi
\leq C\Vert g\Vert_\varphi
\end{equation*}
for all $g\in \mathcal{C}_0^\infty(\mathbb{C}^n)$, and by density of $\mathcal{C}_0^\infty(\mathbb{C}^n)$ this is true for all $g\in dom(\dquer)$. Conversely, let $f\in dom(\dquer^*_\varphi)$ and take $g\in \mathcal{C}_0^\infty(\mathbb{C}^n)$. Then $g\in dom(\dquer)$ and
 \begin{align*}
 \langle g,\dquer^*_\varphi f \rangle_\varphi=&\langle\dquer g,f\rangle_\varphi\\
 =&\sum_{j=1}^n\langle\frac{\partial g}{\partial \overline z_j},f_j\rangle_\varphi\\
 =&-\langle g,\sum_{j=1}^n\frac{\partial}{\partial z_j}\left( f_je^{-\varphi}\right)\rangle_{L^2}\\
 =&-\langle g,e^{\varphi}\sum _{j=1}^n\frac{\partial}{\partial z_j}\left( f_je^{-\varphi}\right)\rangle_\varphi.
 \end{align*}
Since $\mathcal{C}_0^\infty(\mathbb{C}^n)$ is dense in $L^2(\mathbb{C}^n,\varphi)$, we conclude that 
\begin{equation*}
\dquer^*_\varphi f=-e^{\varphi}\sum_{j=1}^n\frac{\partial}{\partial z_j}\left( fe^{-\varphi}\right),
\end{equation*}
which in particular implies that $e^\varphi\sum_{j=1}^n\frac{\partial}{\partial z_j}\left( f_je^{-\varphi}\right)\in L^2(\mathbb C^n,\varphi)$.
\end{proof}
 
The following Lemma will be important for our considerations.

\begin{lemma}
\label{density}
Forms with coefficients in $\mathcal{C}_0^\infty(\mathbb{C}^n)$ are dense in $dom(\dquer)\cap dom(\dquers)$ in the graph norm $f\mapsto (\Vert f\Vert^2 _\varphi+\Vert \dquer f\Vert _\varphi^2+\Vert \dquers f\Vert _\varphi^2)^\frac{1}{2}$.
\end{lemma}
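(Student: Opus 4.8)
The plan is to prove density in two stages, first reducing to compactly supported forms by a cut-off argument, and then regularizing by convolution. For the cut-off step I would take the family $(\chi_R)_{R\in\mathbb N}$ of radial cut-off functions already used in the proof of Proposition~\ref{dom dquers}, with all first derivatives bounded uniformly by $M$, and given $f=\sum f_j\,d\overline z_j\in\mathrm{dom}(\dquer)\cap\mathrm{dom}(\dquers)$ I would show $\chi_R f\to f$ in the graph norm. Convergence $\chi_R f\to f$ in $L^2_{(0,1)}(\mathbb C^n,\varphi)$ is dominated convergence. For the derivatives one computes $\dquer(\chi_R f)=\chi_R\,\dquer f+\dquer\chi_R\wedge f$ and, using the formula from Proposition~\ref{dom dquers}, $\dquers(\chi_R f)=\chi_R\,\dquers f-\sum_j\frac{\partial\chi_R}{\partial z_j}f_j$; in both cases the first term converges to $\dquer f$ resp. $\dquers f$ by dominated convergence, while the commutator terms are bounded in modulus by $M|f|$ and supported in $\mathbb B_{R+1}\setminus\mathbb B_R$, hence tend to $0$ in $L^2_\varphi$ since $f\in L^2_\varphi$. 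This shows $\chi_R f\to f$ in the graph norm, so compactly supported forms in $\mathrm{dom}(\dquer)\cap\mathrm{dom}(\dquers)$ are dense.

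For the second stage I fix a compactly supported $f\in\mathrm{dom}(\dquer)\cap\mathrm{dom}(\dquers)$ and set $f_\varepsilon=f*\rho_\varepsilon$ componentwise, where $\rho_\varepsilon$ is a standard mollifier; then $f_\varepsilon$ has coefficients in $\mathcal C_0^\infty(\mathbb C^n)$ and, on the fixed compact neighbourhood containing all supports, $f_\varepsilon\to f$, $\dquer f_\varepsilon=(\dquer f)*\rho_\varepsilon\to\dquer f$ in the ordinary (unweighted) $L^2$-norm, which is equivalent to $\|\cdot\|_\varphi$ on that compact set since $e^{-\varphi}$ is bounded above and below there. The only subtlety is $\dquers$: since $\varphi$ is not constant, $\dquers$ does not commute with convolution. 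Here I would use $\dquers f=-e^{\varphi}\sum_j\frac{\partial}{\partial z_j}(f_je^{-\varphi})=\vartheta f$ where $\vartheta$ is the unweighted formal adjoint-type operator $-\sum_j\frac{\partial}{\partial z_j}(\,\cdot\,)$ applied after multiplying by $e^{-\varphi}$; concretely $\dquers f=-\sum_j\bigl(\frac{\partial f_j}{\partial z_j}-\frac{\partial\varphi}{\partial z_j}f_j\bigr)$. Writing $\dquers f_\varepsilon=-\sum_j\frac{\partial (f_j*\rho_\varepsilon)}{\partial z_j}+\sum_j\frac{\partial\varphi}{\partial z_j}(f_j*\rho_\varepsilon)$ and comparing with $(\dquers f)*\rho_\varepsilon=-\sum_j\frac{\partial f_j}{\partial z_j}*\rho_\varepsilon+\sum_j\bigl(\frac{\partial\varphi}{\partial z_j}f_j\bigr)*\rho_\varepsilon$, the first terms agree, so the difference is the commutator $\sum_j\bigl[\frac{\partial\varphi}{\partial z_j}(f_j*\rho_\varepsilon)-\bigl(\frac{\partial\varphi}{\partial z_j}f_j\bigr)*\rho_\varepsilon\bigr]$, which is a Friedrichs-type commutator: since $\frac{\partial\varphi}{\partial z_j}$ is continuous (indeed $\mathcal C^1$) and $f_j\in L^2_{\mathrm{loc}}$ with compact support, this tends to $0$ in $L^2$ of the fixed compact set as $\varepsilon\to0$. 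Hence $\dquers f_\varepsilon\to(\dquers f)*\rho_\varepsilon+o(1)\to\dquers f$ in $L^2_\varphi$.

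Combining the two stages: given an arbitrary $f\in\mathrm{dom}(\dquer)\cap\mathrm{dom}(\dquers)$ and $\delta>0$, choose $R$ with $\|\chi_R f-f\|_{\mathrm{graph}}<\delta/2$, then choose $\varepsilon$ small so that $\|(\chi_R f)_\varepsilon-\chi_R f\|_{\mathrm{graph}}<\delta/2$; note $(\chi_R f)_\varepsilon$ has $\mathcal C_0^\infty$ coefficients, and we are done. I expect the main obstacle to be precisely the fact that $\dquers$ involves the weight and therefore does not commute with mollification; everything hinges on the Friedrichs commutator lemma, i.e. on the continuity of $\nabla\varphi$, to control $\dquers f_\varepsilon-\dquers f$. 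The cut-off step is comparatively routine once one writes out $\dquer$ and $\dquers$ of a product and observes that the commutator terms live on the annulus $\mathbb B_{R+1}\setminus\mathbb B_R$ and are dominated by $M|f|\in L^2_\varphi$.
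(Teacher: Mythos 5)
Your proof is correct and follows essentially the same route as the paper's: first a cut-off by the radial functions $\chi_R$, computing $\dquers(\chi_R f)=\chi_R\dquers f-\sum_j\frac{\partial\chi_R}{\partial z_j}f_j$ and noting the commutator term is supported in the annulus and dominated by $M|f|$, then mollification of the compactly supported form via Friedrichs' lemma. The only difference is that the paper simply cites Friedrichs' lemma for the second stage, whereas you spell out the commutator $\frac{\partial\varphi}{\partial z_j}(f_j*\rho_\varepsilon)-(\frac{\partial\varphi}{\partial z_j}f_j)*\rho_\varepsilon$ explicitly; this is a correct elaboration, not a different argument.
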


\begin{proof}
First we show that compactly supported $L^2$-forms are dense in the graph norm. So let $\lbrace\chi_R\rbrace_{R\in \mathbb N}$ be a family of smooth radially symmetric cutoffs identically one on $\mathbb B_R$ and supported in $\mathbb B _{R+1}$, such that all first order derivatives of the functions in this family are uniformly bounded in $R$ by a constant $M$.\\
Let $f\in dom(\dquer)\cap dom(\dquers)$. Then, clearly, $\chi_R f\in dom(\dquer)\cap dom(\dquers)$ and $\chi_R f\to f$ in $L^2_{(0,1)}(\mathbb C^n, \varphi)$ as $R\to\infty$. As observed in Proposition \ref{dom dquers}, we have
\begin{equation*}
\dquer^*_\varphi f=-e^{\varphi}\sum_{j=1}^n\frac{\partial}{\partial z_j}\left( f_je^{-\varphi}\right),
\end{equation*}
hence
\begin{equation*}
\dquers (\chi_Rf)=-e^{\varphi}\sum_{j=1}^n\frac{\partial}{\partial z_j}\left( \chi _Rf_je^{-\varphi}\right).
\end{equation*}
We need to estimate the difference of these expressions 
\begin{equation*}
\dquers f-\dquers (\chi_R f) =\dquers f -\chi_R\dquers f +\sum _{j=1}^n\frac{\partial \chi_R}{\partial z_j}f_j,
\end{equation*}
which is by the triangle inequality
\begin{align*}
\Vert \dquers f-\dquers (\chi_R f)\Vert_\varphi \leq&\Vert\dquers f -\chi_R\dquers f\Vert_\varphi+M\sum_{j=1}^n\int \limits_{\mathbb C^n\setminus\mathbb B_R}|f_j|^2e^{-\varphi}\, d\lambda.
\end{align*}
Now both terms tend to $0$ as $R\to\infty$, and one can see similarly that also $\dquer(\chi_R f)\to\dquer f$ as $R\to \infty$.\\
So we have density of compactly supported forms in the graph norm, and density of forms with coefficients in $\mathcal{C}_0^\infty(\mathbb{C}^n)$ will follow by applying Friedrich$^\prime$s Lemma, see appendix D in \cite{ChSh}, see also \cite{Jo}.
\end{proof}

As in the case of bounded domains, the canonical solution operator to $\dquer,$ which we  denote by $\mathcal{S}_\varphi,$ is given by $\dquers N_\varphi.$ Existence and compactness of $N_\varphi $ and $\mathcal{S}_\varphi $ are closely related. At first, we notice that equivalent weight functions have the same properties in this regard.

\begin {lemma}
\label{glattheit}
Let $\varphi _1$ and $\varphi _2$ be two equivalent weights, i.e., $C^{-1}\Vert .\Vert _{\varphi _1}\leq\Vert .\Vert_{\varphi _2}\leq C\Vert .\Vert _{\varphi _1}$ for some $C>0$. Suppose that $\mathcal S_{\varphi_2}$ exists. Then $\mathcal S_{\varphi_1}$ also exists and  $\mathcal S_{\varphi_1}$ is compact if and only if $\mathcal S _{\varphi_2}$ is compact.\\ 
An analog statement is true for the weighted $\dquer$-Neumann operator.
\end{lemma}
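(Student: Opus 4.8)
\emph{Plan.} I would exploit that the hypothesis $C^{-1}\|\cdot\|_{\varphi_1}\le\|\cdot\|_{\varphi_2}\le C\|\cdot\|_{\varphi_1}$ says exactly that $e^{-\varphi_1}$ and $e^{-\varphi_2}$ are pointwise comparable, i.e.\ that $\varphi_1-\varphi_2$ is bounded on $\mathbb C^n$. Hence, for $q=0,1,2$, the spaces $L^2_{(0,q)}(\mathbb C^n,\varphi_1)$ and $L^2_{(0,q)}(\mathbb C^n,\varphi_2)$ coincide as sets and carry equivalent norms, so they carry the same topology, and the operator $\ovprt$, which does not involve the weight, has the same domain and the same action in the two complexes. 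In particular $\ker\ovprt$ and $\operatorname{ran}\ovprt$ are the same subspaces in each degree, and $\operatorname{ran}\ovprt$ is closed for $\|\cdot\|_{\varphi_1}$ if and only if it is closed for $\|\cdot\|_{\varphi_2}$. By contrast $\dquers$ depends on the weight through the zeroth order term $\partial\varphi/\partial z_j$, and by Proposition \ref{dom dquers} even $\operatorname{dom}(\dquers)$ changes in general, so $N_{\varphi_1},N_{\varphi_2}$ and the Dirichlet forms $Q_{\varphi_1},Q_{\varphi_2}$ are genuinely different objects and the basic and compactness estimates for $Q_\varphi$ cannot be transported directly. That is the one real obstacle, and it is circumvented by phrasing everything in terms of the weight-independent data attached to $\ovprt$ alone.

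Next, for existence I would note that a bounded solution operator for $\ovprt$ in a given degree exists precisely when $\operatorname{ran}\ovprt$ is closed there, which by the above is weight-independent; hence $\mathcal S_{\varphi_2}$ existing forces $\mathcal S_{\varphi_1}$ to exist, and conversely. For $N_\varphi$ one needs in addition $\operatorname{ran}\ovprt$ closed in the next degree -- again weight-independent -- and the harmonic space $\ker\ovprt\cap\ker\dquers$ on $(0,1)$-forms to be trivial; once the ranges are closed this space is isomorphic to the cohomology $\ker\ovprt/\overline{\operatorname{ran}\ovprt}$ in that degree, which is the same quotient for both weights, so it vanishes for one weight iff for the other. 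This yields the existence statement for $N_\varphi$ as well.

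For compactness -- the heart of the matter -- let $P_{\varphi_1}$ denote the $\langle\cdot,\cdot\rangle_{\varphi_1}$-orthogonal projection of $L^2(\mathbb C^n,\varphi_1)$ onto the closed subspace $\ker\ovprt$. If $v$ is $\ovprt$-closed, then $\mathcal S_{\varphi_1}v$ and $\mathcal S_{\varphi_2}v$ both solve $\ovprt u=v$, so their difference lies in $\ker\ovprt$; as $\mathcal S_{\varphi_1}v$ is $\varphi_1$-orthogonal to $\ker\ovprt$, applying $P_{\varphi_1}$ gives $\mathcal S_{\varphi_1}v=(I-P_{\varphi_1})\,\mathcal S_{\varphi_2}v$. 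A short computation shows $\mathcal S_\varphi=\dquers N_\varphi$ annihilates the $\varphi$-orthogonal complement of $\ker\ovprt$ in its source space, so this identity upgrades to $\mathcal S_{\varphi_1}=(I-P_{\varphi_1})\,\mathcal S_{\varphi_2}\,P^{\ovprt}_{\varphi_1}$ on all of $L^2_{(0,1)}(\mathbb C^n)$, where $P^{\ovprt}_{\varphi_1}$ is the $\varphi_1$-orthogonal projection onto the kernel of $\ovprt$ on $(0,1)$-forms. Since $I-P_{\varphi_1}$ and $P^{\ovprt}_{\varphi_1}$ are bounded and compactness of an operator between the two coinciding Hilbert spaces does not depend on which of the two equivalent norms is used, I get $\mathcal S_{\varphi_2}$ compact $\Rightarrow\mathcal S_{\varphi_1}$ compact, and the converse by symmetry. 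Finally, as recalled in the Introduction, $N_\varphi$ is compact if and only if the canonical solution operators $\dquers N_\varphi$ on $(0,1)$-forms and on $(0,2)$-forms are compact, and each of these is a canonical solution operator to which the argument just given applies verbatim; this settles the analogous statement for $N_\varphi$.
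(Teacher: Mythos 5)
Your proof is correct and follows essentially the same route as the paper: both express $\mathcal S_{\varphi_1}$ as the composition of $\mathcal S_{\varphi_2}$ with the bounded identity embeddings coming from the norm equivalence and with the orthogonal projection off $\ker(\dquer)$, and both reduce the Neumann-operator statement to the solution operators. Your version is in fact slightly more careful on two points the paper glosses over — you correctly use $I-P_{\varphi_1}$ (the paper writes $P$ for what must be the projection onto the orthogonal complement of $\ker\dquer$) and you justify why $\mathcal S_\varphi$ vanishes on $(\ker\dquer)^{\perp}$ so that the identity holds on all of $L^2_{(0,1)}$.
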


\begin{proof}  Let $\iota$ be the identity $\iota : L^2_{(0,1)}(\mathbb C^n,\varphi_1)\rightarrow L^2_{(0,1)}(\mathbb C^n,\varphi_2)$, $\iota f =f$, let $j$ be the identity $j: L^2_{\varphi_2}\to L^2_{\varphi_1}$ and let furthermore $P$ be the orthogonal projection onto $\ker(\dquer)$ in $L^2_{\varphi_1}$ . Since the weights are equivalent, $\iota$ and $j$ are continuous, so if $\mathcal S_{\varphi_2}$ is compact, $j\circ\mathcal S_{\varphi_2}\circ \iota$ gives a solution operator on $L^2_{(0,1)}(\mathbb C^n,\varphi_1)$ that is compact. Therefore the canonical solution operator $\mathcal S_{\varphi _1}=P\circ j^{-1}\circ\mathcal S_{\varphi_2}\circ \iota$ is also compact. Since the problem is symmetric in $\varphi _1$ and $\varphi _2$, we are done.\\
The assertion for the Neumann operator follows by the identity 
$$N_\varphi=\mathcal S_\varphi\mathcal S_\varphi^*+\mathcal S_\varphi^*\mathcal S_\varphi.$$
\end{proof}

Note that whereas existence and compactness of the weighted $\dquer$-Neumann operator is invariant under equivalent weights by Lemma \ref{glattheit}, regularity is not. For examples on bounded pseudoconvex domains, see for instance \cite{ChSh}, chapter 6.\\
\vskip 0.5 cm
Now we suppose that the lowest eigenvalue $\lambda_\varphi $ of the Levi - matrix 
$$M_\varphi=\left(\levim\right)_{jk}$$
of $\varphi $ satisfies
$$\liminf_{|z|\to \infty}\lambda_\varphi (z)>0. \ \ \ (^*)$$
Then, by Lemma \ref{glattheit}, we may assume without loss of generality that $\lambda_\varphi(z)>\epsilon$ for some $\epsilon>0$ and all $z\in\mathbb C^n$, since changing the weight function on a compact set does not influence our considerations.  So we have the following basic estimate

\begin{prop}\label{basic}~\\ For a  plurisubharmonic weight function $\varphi $ satisfying (*), there is a $C>0$ such that
$$\|u\|^2_\varphi \le C( \| \ovprt u \|^2_\varphi + \| \ovprt_\varphi ^* u\|^2_\varphi )$$
for each $(0,1)$-form $u\in $dom\,$(\ovprt ) \,\cap$
dom\,$(\ovprt_\varphi^*).$
\end{prop}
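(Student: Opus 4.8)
The plan is to prove the weighted analogue of the Morrey--Kohn--Hörmander identity and then read off the estimate from the hypothesis $(^*)$. First I would use Lemma \ref{density} to reduce to the case of a form $u=\sum_j u_j\,d\ovli z_j$ with coefficients $u_j\in\mathcal C_0^\infty(\mathbb C^n)$: both sides of the asserted inequality are continuous in the graph norm $f\mapsto(\|f\|_\varphi^2+\|\dquer f\|_\varphi^2+\|\dquers f\|_\varphi^2)^{1/2}$, so density of compactly supported smooth forms suffices to pass to the general $u\in\mathrm{dom}(\dquer)\cap\mathrm{dom}(\dquers)$.

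Next, for such $u$ I would compute $\|\dquer u\|_\varphi^2+\|\dquers u\|_\varphi^2$ directly. Using the formula $\dquers u=-\sum_j\bigl(\frac{\partial}{\partial z_j}-\frac{\partial\varphi}{\partial z_j}\bigr)u_j$ from Proposition \ref{dom dquers} and the expression for $\dquer u$ in terms of $\frac{\partial u_j}{\partial\ovli z_k}$, one expands both squared norms, integrates by parts in the weighted space (the boundary terms vanish since the $u_j$ have compact support), and the key point is that the commutator of $\frac{\partial}{\partial\ovli z_k}$ with the formal adjoint $\frac{\partial}{\partial z_j}-\frac{\partial\varphi}{\partial z_j}$ produces exactly the Levi form term. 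The resulting identity is
\begin{equation*}
\|\dquer u\|_\varphi^2+\|\dquers u\|_\varphi^2=\sum_{j,k}\int_{\mathbb C^n}\levim\,u_j\,\ovli{u_k}\,e^{-\varphi}\,d\lambda+\sum_{j,k}\int_{\mathbb C^n}\Bigl|\frac{\partial u_j}{\partial\ovli z_k}\Bigr|^2 e^{-\varphi}\,d\lambda,
\end{equation*}
valid because $\varphi$ is plurisubharmonic so that all terms on the right are nonnegative. This is the weighted Morrey--Kohn--Hörmander formula; I would either quote it from \cite{HaHe} or \cite{ChSh} or include the short integration-by-parts computation.

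Finally, from the normalization arranged just before the statement — namely that, after replacing $\varphi$ by an equivalent weight via Lemma \ref{glattheit}, the lowest eigenvalue of $M_\varphi$ satisfies $\lambda_\varphi(z)>\epsilon$ for all $z\in\mathbb C^n$ — the first term on the right dominates $\epsilon\sum_j\int|u_j|^2 e^{-\varphi}\,d\lambda=\epsilon\|u\|_\varphi^2$. Dropping the (nonnegative) second term then gives $\epsilon\|u\|_\varphi^2\le\|\dquer u\|_\varphi^2+\|\dquers u\|_\varphi^2$, i.e. the claim with $C=1/\epsilon$. The only genuine subtlety is the justification of the integration by parts producing the Levi-form identity; once one is on $\mathcal C_0^\infty$ coefficients this is routine, which is precisely why the density reduction via Lemma \ref{density} is carried out first. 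I do not expect any serious obstacle beyond bookkeeping.
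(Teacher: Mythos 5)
Your proposal is correct and follows essentially the same route as the paper: reduction to $\mathcal C_0^\infty$ coefficients via Lemma \ref{density}, the weighted Morrey--Kohn--H\"ormander identity obtained by integration by parts with the commutator $\bigl[\delta_j,\frac{\partial}{\partial\ovli z_k}\bigr]=\frac{\partial^2\varphi}{\partial z_j\partial\ovli z_k}$, and the normalized eigenvalue bound $\lambda_\varphi>\epsilon$ to conclude with $C=1/\epsilon$. The only quibble is that the identity itself holds without plurisubharmonicity; that hypothesis is used only to ensure the Levi-form term controls $\epsilon\|u\|_\varphi^2$ from below.
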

\begin{proof}
By Lemma \ref{density} and the assumption on $\varphi$ it suffices to show that
$$\int_{\mathbb{C}^n} \sum_{j,k=1}^n\frac{\partial^2 \varphi}{\partial z_j \partial\overline{z}_k}\,u_j\overline{u}_k\, e^{-\varphi}\,d\lambda \le \|\overline{\partial}u\|^2_\varphi +
\| \ovprt_\varphi ^* u\|^2_\varphi ,$$
for each $(0,1)$-form $u=\sum_{k=1}^n u_k\,d\overline{z}_k$ with coefficients $u_k\in \mathcal{C}^\infty_0(\mathbb{C}^n), $ for $k=1,\dots ,n.
$

For this purpose we set $\delta_k=\frac{\partial}{\partial z_k}-\frac{\partial \varphi}{\partial z_k}$ and get since 
$$\overline{\partial}u = \sum_{j<k} \left ( \frac{\partial u_j}{\partial \overline{z}_k}-
\frac{\partial u_k}{\partial \overline{z}_j}\right )\,d\overline{z}_j \wedge d\overline{z}_k$$
that
$$\| \ovprt u \|^2_\varphi + \| \ovprt_\varphi ^* u\|^2_\varphi = 
\int_{\mathbb{C}^n}\sum_{j<k} \left |\frac{\partial u_j}{\partial \overline{z}_k}-
\frac{\partial u_k}{\partial \overline{z}_j}\right |^2 \, e^{-\varphi}\,d\lambda +
\int_{\mathbb{C}^n} \sum_{j,k =1}^n \delta_ju_j\,
\overline{\delta_k u_k}\,  e^{-\varphi}\,d\lambda $$
$$=\sum_{j,k =1}^n \int_{\mathbb{C}^n} \left |\frac{\partial u_j}{\partial \overline{z}_k}
\right |^2 \,e^{-\varphi}\,d\lambda +
\sum_{j,k=1}^n  \int_{\mathbb{C}^n} \left ( \delta_ju_j\,
\overline{\delta_k u_k} - \frac{\partial u_j}{\partial \overline{z}_k}\, 
\overline{\frac{\partial u_k}{\partial \overline{z}_j}}\, \right ) \,e^{-\varphi}\,d\lambda $$
$$=\sum_{j,k =1}^n \int_{\mathbb{C}^n}  \left |\frac{\partial u_j}{\partial \overline{z}_k}
\right |^2 \,e^{-\varphi}\,d\lambda + \sum_{j,k =1}^n  \int_{\mathbb{C}^n} \left [ \delta_j,\frac{\partial}{\partial\overline{z}_k} \right ] \,u_j \, \overline{u}_k\, 
e^{-\varphi}\,d\lambda, $$
where we used the fact that for $f,g\in \mathcal{C}^\infty_0(\mathbb{C}^n)$ we have
$$\left \langle \frac{\partial f}{\partial \overline{z}_k}, g\right \rangle_\varphi =
- \langle f, \delta_kg \rangle_\varphi.
$$
Since
$$ \left [ \delta_j,\frac{\partial}{\partial\overline{z}_k} \right ]= 
\frac{\partial^2 \varphi}{\partial z_j \partial\overline{z}_k},$$

and $\varphi $ satisfies (*) we are done (see also \cite{H}).
\end{proof}
Now it follows by Proposition \ref{basic} that there exists a uniquely determined $(0,1)$-form 
\newline $N_\varphi u\in $dom\,($\ovprt) \,\cap$ dom\,($\ovprt_\varphi^*)$ such that
$$\langle u,v\rangle_\varphi=Q_\varphi (N_\varphi u,v)= \langle\ovprt N_\varphi u,\ovprt v\rangle_\varphi + \langle\ovprt_\varphi^* N_\varphi u,
\ovprt_\varphi^* v\rangle_\varphi,$$
and again by \ref{basic} that
$$\|\ovprt N_\varphi u \|_\varphi ^2 + \|\ovprt_\varphi^* N_\varphi u \|_\varphi ^2 \le  C_1 \|u\|_\varphi ^2$$
as well as
$$\|N_\varphi u\|_\varphi ^2 \le C_2 (\|\ovprt N_\varphi u \|_\varphi ^2 + \|\ovprt_\varphi^* N_\varphi u \|_\varphi ^2) \le  C_3 \|u\|_\varphi ^2, $$
where $C_1, C_2, C_3>0$ are constants. Hence we get that $N_\varphi$ is a continuous linear operator from $L^2_{(0,1)}( \mathbb{C}^n, \varphi)$ into itself (see also \cite{H} or \cite{ChSh}). 
\vskip 1 cm

\section{Weighted Sobolev spaces}~\\

We want to study compactness of the weighted $\ovprt$-Neumann operator $N_\varphi.$ For this purpose we define weighted Sobolev spaces and prove, under suitable conditions,  a Rellich - Lemma for these weighted Sobolev spaces. We will also have to consider their dual spaces, which already appeared in \cite{BDH} and \cite{KM}.

\begin{definition}\label{sob}~\\
For $k\in \mathbb{N}$ let 
$$W^k(\mathbb{C}^n, \varphi):= \{ f\in L^2(\mathbb{C}^n, \varphi) \ : \ D^\alpha f \in 
 L^2(\mathbb{C}^n, \varphi) \ {\text{for}} \ |\alpha | \le k \},$$
 where $D^\alpha =\frac{\partial^{| \alpha|}}{\partial^{\alpha_1}x_1\dots \partial^{ \alpha_{2n}}y_n}$ for $(z_1,\dots ,z_n)=(x_1,y_1,\dots ,x_n,y_n)$ with norm
 $$\|f\|_{k,\varphi}^2= \sum_{|\alpha |\le k}\|D^\alpha f\|_\varphi ^2.$$
 \end{definition}

We will also need weighted Sobolev spaces with negative exponent. But it turns out that for our purposes it is more reasonable to consider the dual spaces of the following spaces.
\begin{definition}\label{dualsob}~\\
Let 
$$X_j=\frac{\partial }{\partial x_j} - \frac{\partial \varphi}{\partial x_j} \ {\text {and}} \ 
Y_j=\frac{\partial }{\partial y_j} - \frac{\partial \varphi}{\partial y_j},$$
for $j=1,\dots ,n$ and define
$$W^1(\mathbb C^n, \varphi ,\nabla \varphi)=\{ f\in L^2(\mathbb{C}^n, \varphi) \ : X_jf,\ Y_jf \in  L^2(\mathbb{C}^n, \varphi) , j=1,\dots ,n \},$$
with norm
$$\|f\|^2_{ \varphi ,\nabla \varphi}= \|f\|^2_{ \varphi }+\sum_{j=1}^n( \|X_jf\|^2_\varphi
+ \|Y_jf\|^2_\varphi) .$$
\end{definition}
In the next step we will analyze the dual space of $W^1(\mathbb C^n, \varphi ,\nabla \varphi).$

By the mapping $f\mapsto (f, X_jf,Y_jf)$, the space $W^1(\mathbb C^n, \varphi ,\nabla \varphi)$ can be identified with a closed product of $L^2(\mathbb{C}^n, \varphi),$ 
hence each continuous linear functional $L$ on $W^1(\mathbb C^n, \varphi ,\nabla \varphi)$ is represented (in a non-unique way) by
$$ L(f)= \int_{\mathbb{C}^n} f(z)g_0(z)e^{-\varphi (z)}\,d\lambda (z) +
\sum_{j=1}^n   \int_{\mathbb{C}^n}(X_jf(z)g_j(z)+Y_jf(z)h_j(z))e^{-\varphi (z)}\,d\lambda (z),$$
for some $g_j,h_j\in L^2(\mathbb{C}^n, \varphi).$

For $f\in \mathcal{C}^\infty_0(\mathbb{C}^n)$ it follows that
$$L(f)= \int_{\mathbb{C}^n} f(z)g_0(z)e^{-\varphi (z)}\,d\lambda (z) -
\sum_{j=1}^n  \int_{\mathbb{C}^n} f(z)\left  ( \frac{\partial g_j(z)}{\partial x_j}+
\frac{\partial h_j(z)}{\partial y_j} \right )e^{-\varphi (z)}\,d\lambda (z) .$$
Since $ \mathcal{C}^\infty_0(\mathbb{C}^n)$ is dense in $W^1(\mathbb C^n, \varphi ,\nabla \varphi)$ we have shown

\begin{lemma}\label{dualsobo}~\\
Each element $u\in W^{-1}(\mathbb C^n, \varphi ,\nabla \varphi):=(W^1(\mathbb C^n, \varphi ,\nabla \varphi))'$ can be represented in a non-unique way by
$$u=g_0+\sum_{j=1}^n \left ( \frac{\partial g_j}{\partial x_j}+
\frac{\partial h_j}{\partial y_j} \right ),$$
where $g_j,h_j\in L^2(\mathbb{C}^n, \varphi).$

The dual norm $\|u\|_{-1,\varphi, \nabla \varphi}:=\sup \{|u(f)| \, : \, \|f\|_{ \varphi ,\nabla \varphi}\le 1\}$ can be expressed in the form
$$\|u\|^2_{-1,\varphi, \nabla \varphi}=\inf \{ \|g_0\|^2+ \sum_{j=1}^n (\|g_j\|^2+\|h_j\|^2 ),$$
where the infimum is taken over all families $(g_j,h_j)$ in $L^2(\mathbb{C}^n, \varphi)$
representing the functional $u$.
\end{lemma}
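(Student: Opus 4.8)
The plan is to identify $W^1(\mathbb{C}^n,\varphi,\nabla\varphi)$ with a closed subspace of the Hilbert space $H:=\bigoplus_{j=0}^{2n}L^2(\mathbb{C}^n,\varphi)$ via the isometric embedding $T: f\mapsto (f,X_1f,Y_1f,\dots,X_nf,Y_nf)$, using Definition \ref{dualsob} for the norm. This $T$ is an isometry by the very definition of $\|\cdot\|_{\varphi,\nabla\varphi}$, and its image is closed because $W^1(\mathbb{C}^n,\varphi,\nabla\varphi)$ is complete (a routine check: if $f_n$ is Cauchy in the graph norm, the components converge in $L^2(\mathbb{C}^n,\varphi)$ and one identifies the limits of $X_jf_n,Y_jf_n$ with $X_jf,Y_jf$ as distributions, since the zeroth-order multipliers $\partial\varphi/\partial x_j$ etc. are $L^2_{\mathrm{loc}}$ and convergence in $L^2(\mathbb{C}^n,\varphi)$ implies convergence in $L^1_{\mathrm{loc}}$).

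Next, given a continuous linear functional $u\in W^{-1}(\mathbb{C}^n,\varphi,\nabla\varphi)$, I would push it forward to $T(W^1)$ and extend it by Hahn--Banach (or, better, by composing with the orthogonal projection) to all of $H$. By the Riesz representation theorem on $H$ there exist $g_0,g_1,h_1,\dots,g_n,h_n\in L^2(\mathbb{C}^n,\varphi)$ with
$$u(f)=\langle f,g_0\rangle_\varphi+\sum_{j=1}^n\big(\langle X_jf,g_j\rangle_\varphi+\langle Y_jf,h_j\rangle_\varphi\big),\qquad f\in W^1(\mathbb{C}^n,\varphi,\nabla\varphi).$$
Restricting to $f\in\mathcal{C}^\infty_0(\mathbb{C}^n)$ and integrating by parts, the adjoint of $X_j$ in $L^2(\mathbb{C}^n,\varphi)$ is $-\partial/\partial x_j$ (the weight factors $e^{-\varphi}$ combine with $-\partial\varphi/\partial x_j$ to cancel exactly, as is already used in the paper for $\delta_k$), and similarly for $Y_j$. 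This yields $u(f)=\int f\,(g_0-\sum_j(\partial g_j/\partial x_j+\partial h_j/\partial y_j))\,e^{-\varphi}\,d\lambda$ as asserted, where the derivatives of $g_j,h_j$ are taken in the sense of distributions; density of $\mathcal{C}^\infty_0(\mathbb{C}^n)$ in $W^1(\mathbb{C}^n,\varphi,\nabla\varphi)$ then pins down $u$ as the distribution $g_0+\sum_j(\partial g_j/\partial x_j+\partial h_j/\partial y_j)$. This is precisely the representation claimed, and non-uniqueness is clear since the representing tuple came from a Hahn--Banach extension / the kernel of $T^*$.

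For the norm formula, the $\ge$ inequality is immediate: for any representing tuple, Cauchy--Schwarz gives $|u(f)|\le\big(\|g_0\|^2+\sum_j(\|g_j\|^2+\|h_j\|^2)\big)^{1/2}\|f\|_{\varphi,\nabla\varphi}$, hence $\|u\|^2_{-1,\varphi,\nabla\varphi}$ is bounded above by the infimum. For the reverse, observe that the Riesz representative obtained above from the minimal-norm extension of $u$ to $H$ has norm exactly $\|u\|_{-1,\varphi,\nabla\varphi}$ (the norm-preserving extension exists because $T(W^1)$ is closed, so one may take the extension that vanishes on the orthogonal complement), so that particular tuple achieves equality, showing the infimum is $\le\|u\|^2_{-1,\varphi,\nabla\varphi}$.

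I expect no single hard step here — the argument is the standard duality-via-Riesz trick for Sobolev spaces — but the point requiring the most care is verifying that $T(W^1(\mathbb{C}^n,\varphi,\nabla\varphi))$ is genuinely closed in $H$, i.e.\ completeness of the weighted space with the modified first-order operators $X_j,Y_j$; this is where one must use that $\nabla\varphi\in L^\infty_{\mathrm{loc}}$ (indeed $\varphi\in\mathcal{C}^2$) so that $L^2(\mathbb{C}^n,\varphi)$-convergence controls the distributional limits of $X_jf_n,Y_jf_n$. Once closedness is in hand, the norm-preserving Hahn--Banach extension is automatic and the rest is bookkeeping with the integration-by-parts identity $\langle\partial f/\partial\overline z_k,g\rangle_\varphi=-\langle f,\delta_kg\rangle_\varphi$ already recorded in the proof of Proposition \ref{basic}, transcribed to the real operators $X_j,Y_j$.
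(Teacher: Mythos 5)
Your proposal is correct and follows essentially the same route as the paper: identify $W^1(\mathbb C^n,\varphi,\nabla\varphi)$ isometrically with a closed subspace of a finite product of copies of $L^2(\mathbb C^n,\varphi)$, represent the functional there, integrate by parts against $\mathcal{C}^\infty_0$ test functions using $X_j^*=-\partial/\partial x_j$, and conclude by density of $\mathcal{C}^\infty_0(\mathbb C^n)$ in $W^1(\mathbb C^n,\varphi,\nabla\varphi)$. You supply more detail than the paper on two points it leaves implicit (closedness of the image of $f\mapsto(f,X_jf,Y_jf)$ and the proof of the dual-norm formula, which the paper delegates to a reference), and the harmless sign/conjugation discrepancies in your representation are absorbed by the non-uniqueness of the representing tuple.
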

(see for instance \cite{T})\\
In particular each function in $L^2(\mathbb C^n,\varphi)$ can be indentified with an element of $W^{-1}(\mathbb C^n, \varphi ,\nabla \varphi)$.

\begin{prop}\label{rellich}~\\ 
Suppose that the weight function satisfies
$$\lim_{|z|\to \infty}(\theta |\nabla \varphi (z)|^2+\triangle \varphi (z))= +\infty ,$$
for some $\theta \in (0,1),$ where 
$$|\nabla \varphi (z)|^2= \sum_{k=1}^n \left ( \left | \frac{\partial \varphi}{\partial x_k}\right |^2+ \left | \frac{\partial \varphi}{\partial y_k}\right |^2 \right ).$$
Then the embedding of $W^1(\mathbb C^n, \varphi ,\nabla \varphi)$ into $ L^2(\mathbb{C}^n, \varphi) $ is compact.
\end{prop}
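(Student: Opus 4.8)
The plan is to transfer the problem to a weighted Sobolev setting where a classical Rellich lemma applies, by conjugating with the weight. First I would introduce the isometry
$$U : L^2(\mathbb{C}^n,\varphi) \longrightarrow L^2(\mathbb{C}^n), \qquad Uf = e^{-\varphi/2} f,$$
which is a unitary map onto the unweighted space. Under $U$, the operator $X_j$ becomes $e^{-\varphi/2} X_j (e^{\varphi/2} \cdot) = \partial/\partial x_j - \tfrac12 \partial\varphi/\partial x_j$, and similarly for $Y_j$. Hence $f \in W^1(\mathbb{C}^n,\varphi,\nabla\varphi)$ if and only if $g := Uf$ lies in $L^2(\mathbb{C}^n)$ together with $(\partial_{x_j} - \tfrac12 \varphi_{x_j})g$ and $(\partial_{y_j} - \tfrac12 \varphi_{y_j})g$, and the norms correspond. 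So it suffices to prove that a bounded sequence $(g_\nu)$ in this conjugated space has a subsequence converging in $L^2(\mathbb{C}^n)$.

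The next step is the key computation: for $g \in \mathcal{C}_0^\infty(\mathbb{C}^n)$, expand $\| (\partial_{x_j} - \tfrac12\varphi_{x_j})g \|^2$ in $L^2(\mathbb{C}^n)$ by integration by parts. The cross term produces $-\tfrac12 \int \varphi_{x_j} \, \partial_{x_j}(|g|^2)$, which upon a further integration by parts becomes $\tfrac12 \int \varphi_{x_j x_j} |g|^2$. Summing over all $x_j, y_j$ one obtains the identity
$$\sum_{j=1}^n \left( \| (\partial_{x_j} - \tfrac12\varphi_{x_j})g \|^2 + \| (\partial_{y_j} - \tfrac12\varphi_{y_j})g \|^2 \right) = \| \nabla g \|^2 + \int_{\mathbb{C}^n} \left( \tfrac14 |\nabla\varphi|^2 + \tfrac12 \triangle\varphi \right) |g|^2 \, d\lambda,$$
valid first for $g \in \mathcal{C}_0^\infty$ and then, by Friedrichs' lemma (as in the proof of Lemma \ref{density}) or a direct cutoff/mollification argument, for all $g$ in the conjugated space. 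Writing $\tfrac14 |\nabla\varphi|^2 + \tfrac12\triangle\varphi = \tfrac14(|\nabla\varphi|^2 + 2\triangle\varphi)$ and noting that $|\nabla\varphi|^2 + 2\triangle\varphi \geq \theta|\nabla\varphi|^2 + \triangle\varphi$ up to a bounded error (since $1-\theta > 0$ and $\triangle\varphi$ is bounded below on compacta — here one may again invoke Lemma \ref{glattheit} to modify $\varphi$ on a compact set so that $\triangle\varphi \geq 0$ everywhere, or simply absorb the local error), the hypothesis
$$\lim_{|z|\to\infty}\big(\theta|\nabla\varphi(z)|^2 + \triangle\varphi(z)\big) = +\infty$$
forces the potential $V := \tfrac14|\nabla\varphi|^2 + \tfrac12\triangle\varphi$ to tend to $+\infty$ at infinity.

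Thus a bounded sequence $(g_\nu)$ in the conjugated space satisfies a uniform bound on $\|\nabla g_\nu\|^2 + \int V |g_\nu|^2$. The gradient bound gives, by the classical Rellich–Kondrachov theorem, a subsequence converging in $L^2_{\mathrm{loc}}(\mathbb{C}^n)$; the tail bound $\int_{|z|>R} |g_\nu|^2 \leq (\inf_{|z|>R} V)^{-1} \int V|g_\nu|^2 \to 0$ uniformly in $\nu$ as $R\to\infty$ (since $V\to\infty$) controls the mass at infinity. A standard diagonal argument then upgrades the local convergence to convergence in $L^2(\mathbb{C}^n)$. Undoing the unitary conjugation $U$ yields the compactness of the embedding $W^1(\mathbb{C}^n,\varphi,\nabla\varphi) \hookrightarrow L^2(\mathbb{C}^n,\varphi)$.

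The main obstacle I anticipate is justifying the integration-by-parts identity for general (non-smooth, non-compactly-supported) elements of the weighted Sobolev space rather than just for $\mathcal{C}_0^\infty$ functions: one must check that the boundary terms at infinity genuinely vanish and that mollification plus truncation does not disturb the potential term, which is exactly the role of Friedrichs' lemma together with a careful cutoff argument tracking the uniformly bounded derivatives of $\chi_R$. The passage from the integrated identity to the uniform tail estimate, and from $L^2_{\mathrm{loc}}$-compactness to global $L^2$-compactness, is then routine.
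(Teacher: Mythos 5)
Your overall mechanism is the same as the paper's: an integration-by-parts (commutator) identity converting control of $X_jf,\,Y_jf$ into control of $\int(\alpha|\nabla\varphi|^2+\beta\,\triangle\varphi)|f|^2e^{-\varphi}d\lambda$, followed by the classical Rellich--Kondrachov theorem on balls and a uniform tail estimate coming from the divergence of the potential. The paper works directly in the weighted space with $X_j$, its formal adjoint $X_j^*=-\partial/\partial x_j$, and the inequality $\|(X_j+X_j^*)f\|^2_\varphi\le(1+1/\epsilon)\|X_jf\|^2_\varphi+(1+\epsilon)\|X_j^*f\|^2_\varphi$; your unitary conjugation by $e^{-\varphi/2}$ is a legitimate equivalent reformulation, and the identity you derive for $g\in\mathcal{C}_0^\infty(\mathbb{C}^n)$ is correct.

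There is, however, a genuine gap where you match your potential $V=\tfrac14|\nabla\varphi|^2+\tfrac12\triangle\varphi$ against the hypothesis. You need $V\to+\infty$, i.e. $\tfrac12|\nabla\varphi|^2+\triangle\varphi\to+\infty$, which is the hypothesis with $\theta=\tfrac12$; but the proposition assumes it only for \emph{some} $\theta\in(0,1)$, and since $|\nabla\varphi|^2\ge0$ the condition becomes \emph{weaker} as $\theta$ increases, so for $\theta>\tfrac12$ your conclusion does not follow. Your claimed bound $|\nabla\varphi|^2+2\triangle\varphi\ge\theta|\nabla\varphi|^2+\triangle\varphi-C$ amounts to $(1-\theta)|\nabla\varphi|^2+\triangle\varphi\ge-C$; near infinity the hypothesis only yields $\triangle\varphi\ge-\theta|\nabla\varphi|^2$, which is consistent with $(1-\theta)|\nabla\varphi|^2+\triangle\varphi$ behaving like $(1-2\theta)|\nabla\varphi|^2\to-\infty$ when $\theta>\tfrac12$ and $|\nabla\varphi|\to\infty$. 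The defect lives at infinity, so neither ``$\triangle\varphi$ is bounded below on compacta'' nor a compact modification of $\varphi$ via Lemma \ref{glattheit} can repair it. This is exactly why the paper keeps a free parameter: its Cauchy--Schwarz step gives $\Psi=|\nabla\varphi|^2+(1+\epsilon)\triangle\varphi=(1+\epsilon)\bigl(\tfrac{1}{1+\epsilon}|\nabla\varphi|^2+\triangle\varphi\bigr)$, and choosing $\epsilon\le(1-\theta)/\theta$ makes $\tfrac{1}{1+\epsilon}\ge\theta$, so $\Psi\to+\infty$ for every admissible $\theta$. To salvage your route, do not integrate the cross term by parts completely: split $-\mathrm{Re}\langle\partial_{x_j}g,\varphi_{x_j}g\rangle$ into a piece you integrate by parts (producing $\triangle\varphi$) and a piece you estimate by Cauchy--Schwarz with a small parameter against $\|\partial_{x_j}g\|^2$ and $\|\varphi_{x_j}g\|^2$, thereby recovering the tunable ratio; or simply run the argument in the weighted space as the paper does. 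The remaining steps (density of $\mathcal{C}_0^\infty$, local Rellich on $\mathbb{B}_R$ using that $\nabla\varphi$ is locally bounded, tail estimate, diagonal argument) are fine and match the paper.
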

\begin{proof}
We adapt methods from \cite{BDH} or \cite{Jo}, Proposition 6.2., or \cite{KM}. 
For the vector fields $X_j$  from \ref{dualsob} and their formal adjoints $X_j^*=-\frac{\partial}{\partial x_j}$ we have
$$(X_j+X^*_j)f=-\frac{\partial \varphi}{\partial x_j}\, f \ {\text{and}} \ 
[X_j,X^*_j]f= -\frac{\partial^2\varphi}{\partial x_j^2}\, f,$$
for $f\in  \mathcal{C}^\infty_0(\mathbb{C}^n),$ and
$$\langle [X_j,X^*_j]f,f\rangle_\varphi=\|X^*_jf\|^2_\varphi - \|X_jf\|^2_\varphi ,$$
$$\|(X_j+X^*_j)f\|^2_\varphi \le (1+1/\epsilon)\|X_jf\|^2_\varphi + (1+\epsilon)\|X^*_jf\|^2_\varphi $$
for each $\epsilon>0.$ Similar relations hold for the vector fields $Y_j.$
Now we set
$$\Psi (z)=|\nabla \varphi (z)|^2+(1+\epsilon)\triangle \varphi (z).$$

It follows that
$$\langle \Psi f,f\rangle_\varphi \le 
(2+\epsilon +1/\epsilon)\sum_{j=1}^n( \|X_jf\|^2_\varphi
+ \|Y_jf\|^2_\varphi) .$$
Since $\mathcal{C}^\infty_0(\mathbb{C}^n)$ is dense in $W^1(\mathbb C^n, \varphi ,\nabla \varphi)$ by definition, this inequality holds for all $f\in W^1(\mathbb C^n, \varphi ,\nabla \varphi).$

If $(f_k)_k$ is a sequence in $W^1(\mathbb C^n, \varphi ,\nabla \varphi)$ converging weakly to $0,$ then $(f_k)_k$ is a bounded sequence in $W^1(\mathbb C^n, \varphi ,\nabla \varphi)$ and our the assumption implies that 
$$\Psi (z)=|\nabla \varphi (z)|^2+(1+\epsilon)\triangle \varphi (z)$$
is positive in a neighborhood of $\infty $. So we obtain
\begin{align*}
 \int\limits_{\mathbb{C}^n}|f_k(z)|^2e^{-\varphi (z)}\,d\lambda (z) & \le
 \int\limits_{|z|< R}|f_k(z)|^2e^{-\varphi (z)}\,d\lambda (z) +
 \int\limits_{|z|\ge R} \frac{\Psi (z) |f_k(z)|^2}{\inf \{\Psi (z) \, : \, |z|\ge R\}} \, e^{-\varphi (z)}\,d\lambda (z)\\
 &\le  C_{\varphi , R}\, \|f_k\|^2_{L^2(\mathbb B_R)}+ \frac{C_\epsilon \, \|f_k\|^2_{\varphi, \nabla\varphi}}{\inf \{\Psi (z) \, : \, |z|\ge R\}}.
 \end{align*}
Hence the assumption and the fact that the injection $W^1(\mathbb B_R) \hookrightarrow L^2(\mathbb B_R)$ is compact (see for instance \cite{T}) show that a subsequence of $(f_k)_k$ tends to $0$ in $L^2(\mathbb{C}^n, \varphi).$

\end{proof}

\begin{rem}
It follows that the adjoint to the above embedding, the embedding  of $ L^2(\mathbb{C}^n, \varphi) $ into  $(W^1(\mathbb C^n, \varphi ,\nabla \varphi))'=W^{-1}(\mathbb C^n, \varphi ,\nabla \varphi) $ (in the sense of \ref{dualsobo}) is also compact.
\end{rem}
\begin{rem}
Note that one does not need plurisubharmonicity of the weight function in Proposition \ref{rellich}. If the weight is plurisubharmonic, one can of course drop $\theta$ in the formulation of the assumption.
\end{rem}
\vskip 1 cm

\section{Compactness estimates}~\\

The following Proposition reformulates the compactness condition for the case of a bounded pseudoconvex domain in $\mathbb{C}^n$, see \cite{BS}, \cite{Str}. The difference to the compactness estimates for bounded pseudoconvex domains is that here we have to assume an additional condition on the weight function implying a corresponding Rellich - Lemma.

\begin{prop}\label{compact}~\\ 
Suppose that the weight function $\varphi$  satisfies (*) and
$$\lim_{|z|\to \infty}(\theta |\nabla \varphi (z)|^2+\triangle \varphi (z))= +\infty ,$$
for some $\theta \in (0,1),$ then the following statements are equivalent.
\begin{enumerate}
  \item The $\ovprt $-Neumann operator $N_{1,\varphi}$ is a compact operator from $L_{(0,1)}^2(\mathbb{C}^n, \varphi)$ into itself.
  \item The embedding of the space dom\,$(\ovprt )\,\cap$
dom\,$(\ovprt_\varphi^*),$ provided with the graph norm $u\mapsto (\|u\|^2_\varphi + \|\ovprt u\|^2_\varphi + 
\|\ovprt_\varphi ^*u\|^2_\varphi)^{1/2},$ into $L^2_{(0,1)}(\mathbb{C}^n, \varphi)$ is compact.
  \item For every positive $\epsilon $ there exists a constant $C_\epsilon$ such that
  $$\|u\|_\varphi ^2 \le \epsilon (\| \ovprt u \|_\varphi ^2 + \| \ovprt_\varphi ^*u\|_\varphi ^2) + C_\epsilon \|u\|_{-1,\varphi , \nabla \varphi} ^2,$$
  for all $u\in$ dom\,$(\ovprt )\,\cap\,$dom\,$(\ovprt_\varphi^*).$
\item The operators
$$\ovprt_\varphi ^* N_{1,\varphi} : L_{(0,1)}^2(\mathbb{C}^n, \varphi)\cap {\text {ker}}(\ovprt) \longrightarrow L^2(\mathbb{C}^n, \varphi) \ \ {\text {and}}$$
$$\ovprt_\varphi ^* N_{2,\varphi} : L_{(0,2)}^2(\mathbb{C}^n, \varphi)\cap {\text {ker}}(\ovprt) \longrightarrow L_{(0,1)}^2(\mathbb{C}^n, \varphi)$$
are both compact.
\end{enumerate}
\end{prop}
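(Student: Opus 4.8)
The plan is to establish the equivalences by running the standard cycle $(1)\Rightarrow(2)\Rightarrow(3)\Rightarrow(1)$ together with the separate equivalence $(1)\Leftrightarrow(4)$, adapting the bounded‑domain arguments of \cite{BS}, \cite{Str} to the weighted setting, with the new input being Proposition \ref{rellich} and its corollary (the compact embedding $L^2(\mathbb{C}^n,\varphi)\hookrightarrow W^{-1}(\mathbb{C}^n,\varphi,\nabla\varphi)$). First I would prove $(1)\Rightarrow(2)$: on $\ker(\ovprt)^{\perp}\cap\mathrm{dom}(\ovprt)\cap\mathrm{dom}(\ovprt_\varphi^*)$ the graph norm is controlled by $\|\ovprt u\|_\varphi^2+\|\ovprt_\varphi^*u\|_\varphi^2$ via the basic estimate Proposition \ref{basic}, and on that subspace $u = \ovprt_\varphi^*\ovprt N_\varphi u + \ovprt\,\ovprt_\varphi^* N_\varphi u$ can be written through $N_\varphi$ and its adjoint; compactness of $N_\varphi$ then makes the embedding compact after using the Hodge‑type decomposition to split off the harmonic part (which is trivial here since $N_\varphi$ exists). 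For $(2)\Rightarrow(3)$ I would argue by contradiction in the usual way: if the estimate in $(3)$ fails for some $\epsilon_0$, pick $u_k$ with $\|u_k\|_\varphi=1$, $\|\ovprt u_k\|_\varphi^2+\|\ovprt_\varphi^*u_k\|_\varphi^2\to 0$, and $\|u_k\|_{-1,\varphi,\nabla\varphi}\to 0$; by $(2)$ a subsequence converges in $L^2_{(0,1)}(\mathbb{C}^n,\varphi)$, the limit has norm $1$, but the $W^{-1}$‑convergence forces the limit to be $0$, a contradiction. The implication $(3)\Rightarrow(1)$ is where the Rellich lemma enters decisively: given the estimate in $(3)$, one shows $N_\varphi$ is compact by taking a bounded sequence $u_k\rightharpoonup 0$ in $L^2_{(0,1)}(\mathbb{C}^n,\varphi)$, applying the estimate to $N_\varphi u_k$, using $\|\ovprt N_\varphi u_k\|_\varphi^2+\|\ovprt_\varphi^* N_\varphi u_k\|_\varphi^2=\langle u_k,N_\varphi u_k\rangle_\varphi\to 0$ (weak convergence against the bounded sequence $N_\varphi u_k$), and controlling the $\|N_\varphi u_k\|_{-1,\varphi,\nabla\varphi}^2$ term by the compactness of $L^2\hookrightarrow W^{-1}$ noted in the Remark after Proposition \ref{rellich}.

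For the remaining equivalence $(1)\Leftrightarrow(4)$, I would use the identity $N_\varphi = \mathcal{S}_\varphi\mathcal{S}_\varphi^* + \mathcal{S}_\varphi^*\mathcal{S}_\varphi$ recorded in Lemma \ref{glattheit}, where $\mathcal{S}_\varphi = \ovprt_\varphi^* N_{1,\varphi}$ on $(0,1)$-forms and the analogous operator on $(0,2)$-forms is $\ovprt_\varphi^* N_{2,\varphi}$. Compactness of both canonical solution operators in $(4)$ immediately yields compactness of $N_\varphi$ through this formula (and the fact that the adjoint of a compact operator is compact, plus composition with bounded operators); conversely, if $N_{1,\varphi}$ is compact then $\ovprt_\varphi^* N_{1,\varphi} = \ovprt_\varphi^* N_{1,\varphi}^{1/2}\cdot N_{1,\varphi}^{1/2}$ factors a compact operator out, using that $\ovprt_\varphi^* N_{1,\varphi}^{1/2}$ is bounded by the basic estimate, and similarly for the $(0,2)$-level one needs the basic estimate on $(0,2)$-forms (which follows from $(*)$ exactly as Proposition \ref{basic}, since the Levi form hypothesis gives the same commutator term with the lowest eigenvalue) to see that $N_{2,\varphi}$ exists and that $\ovprt_\varphi^* N_{2,\varphi}$ inherits compactness from $N_{2,\varphi}$; finally $N_{2,\varphi}$ compact is itself forced by $N_{1,\varphi}$ compact via the relation between Neumann operators on adjacent form levels.

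The main obstacle I anticipate is $(3)\Rightarrow(1)$, specifically making rigorous that the term $C_\epsilon\|N_\varphi u_k\|_{-1,\varphi,\nabla\varphi}^2$ tends to zero along a subsequence: this requires knowing that $\{N_\varphi u_k\}$ is bounded in $L^2_{(0,1)}(\mathbb{C}^n,\varphi)$ (clear, since $N_\varphi$ is bounded) and then invoking the compactness of the embedding $L^2(\mathbb{C}^n,\varphi)\hookrightarrow W^{-1}(\mathbb{C}^n,\varphi,\nabla\varphi)$ componentwise on the coefficients of the form, which is exactly the content of the Remark following Proposition \ref{rellich} and is the place where the hypothesis $\lim_{|z|\to\infty}(\theta|\nabla\varphi(z)|^2+\triangle\varphi(z))=+\infty$ is used. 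A secondary technical point is the reduction, throughout, to forms with $\mathcal{C}_0^\infty$ coefficients, which is legitimate by Lemma \ref{density}, and the care needed in $(1)\Leftrightarrow(4)$ to set up $N_{2,\varphi}$ and the basic estimate at the $(0,2)$-level; but these are routine given the tools already assembled in the earlier sections.
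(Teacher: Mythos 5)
Your plan follows essentially the same architecture as the paper's proof: the Straube-type cycle among (1)--(3) driven by the basic estimate, the weighted Rellich lemma and the compact embedding of $L^2$ into $W^{-1}(\mathbb C^n,\varphi,\nabla\varphi)$, together with a separate equivalence with (4) via the canonical solution operators. (The paper orders the cycle as $(1)\Leftrightarrow(4)$, then $(4)\Rightarrow(3)\Rightarrow(2)\Rightarrow(1)$; your ordering $(1)\Rightarrow(2)\Rightarrow(3)\Rightarrow(1)$ is equally viable.) Two of your justifications, however, do not stand as written. In $(3)\Rightarrow(1)$ you claim $\langle u_k,N_\varphi u_k\rangle_\varphi\to0$ because $u_k\rightharpoonup0$ is paired "against the bounded sequence $N_\varphi u_k$". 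A weakly null sequence paired against a merely bounded (weakly, not strongly, convergent) sequence need not tend to zero; strong convergence of $N_\varphi u_k$ is exactly the compactness you are trying to establish, so the justification is circular. The step is also unnecessary: it suffices that $\epsilon\,\langle u_k,N_\varphi u_k\rangle_\varphi\le \epsilon\,C$ uniformly in $k$, while $C_\epsilon\|N_\varphi u_k\|^2_{-1,\varphi,\nabla\varphi}\to0$ by the compact embedding, so $\limsup_k\|N_\varphi u_k\|^2_\varphi\le\epsilon C$ for every $\epsilon>0$. The paper sidesteps the issue entirely by proving $(3)\Rightarrow(2)$ (Rellich lemma applied to a graph-norm bounded sequence, then the estimate (3) makes it Cauchy) and $(2)\Rightarrow(1)$ (compose the continuous map $N_\varphi:L^2_{(0,1)}\to \mathrm{dom}(\ovprt)\cap\mathrm{dom}(\ovprt_\varphi^*)$ with the compact embedding). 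A smaller slip of the same kind: in $(2)\Rightarrow(3)$ the negation of (3) gives only $\|\ovprt u_k\|^2_\varphi+\|\ovprt_\varphi^*u_k\|^2_\varphi\le 1/\epsilon_0$, not convergence to $0$; boundedness is all you need, so this one is harmless.

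The more substantive gap is in $(1)\Rightarrow(4)$ for the second operator. You dispose of $\ovprt_\varphi^*N_{2,\varphi}$ by asserting that compactness of $N_{1,\varphi}$ forces compactness of $N_{2,\varphi}$ "via the relation between Neumann operators on adjacent form levels". That percolation statement is true, but it is itself a theorem whose standard proof runs through precisely the identity
$$ N_{1,\varphi}- (\ovprt_\varphi ^* N_{1,\varphi})^*( \ovprt_\varphi ^* N_{1,\varphi})=( \ovprt_\varphi ^* N_{2,\varphi})( \ovprt_\varphi ^* N_{2,\varphi})^*, $$
so as stated you are invoking the conclusion in place of its proof. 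The paper uses this identity directly: once $\ovprt_\varphi^*N_{1,\varphi}$ is known to be compact (the paper gets this from the Catlin--D'Angelo inequality $\|\ovprt_\varphi^*N_{1,\varphi}f\|^2_\varphi\le\langle f,N_{1,\varphi}f\rangle_\varphi\le\epsilon\|f\|^2_\varphi+C_\epsilon\|N_{1,\varphi}f\|^2_\varphi$; your factorization $\ovprt_\varphi^*N_{1,\varphi}=(\ovprt_\varphi^*N_{1,\varphi}^{1/2})\,N_{1,\varphi}^{1/2}$ is a legitimate alternative), the left-hand side is compact, hence $(\ovprt_\varphi^*N_{2,\varphi})(\ovprt_\varphi^*N_{2,\varphi})^*$ is compact, hence so is $\ovprt_\varphi^*N_{2,\varphi}$, with no need to discuss compactness of $N_{2,\varphi}$ at all. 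The converse $(4)\Rightarrow(1)$ then also reads off the same formula. You should replace the percolation appeal by this identity; the rest of your plan is sound modulo the repairs above.
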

\begin{proof}
First we show that (1) and (4) are equivalent: suppose that $N_{1, \varphi}$ is compact. For $f\in L_{(0,1)}^2(\mathbb{C}^n, \varphi)$ it follows that
$$\| \ovprt_\varphi ^* N_{1, \varphi}f \|_\varphi ^2 \le \langle f,N_{1, \varphi}f\rangle_\varphi \le
\epsilon \|f\|_\varphi ^2 + C_\epsilon \|N_{1, \varphi}\|_\varphi ^2$$
by Lemma 2 of \cite{CD}. Hence $ \ovprt_\varphi ^* N_{1, \varphi}$ is compact.
Applying the formula
$$ N_{1, \varphi}- (\ovprt_\varphi ^* N_{1, \varphi})^*( \ovprt_\varphi ^* N_{1, \varphi})=
( \ovprt_\varphi ^* N_{2, \varphi})( \ovprt_\varphi ^* N_{2, \varphi})^*,$$
(see for instance \cite{ChSh}), we get that  also $\ovprt_\varphi ^* N_{2, \varphi}$ is compact.
The converse follows easily from the same formula.

Now we show (4) $\implies $ (3) $\implies $ (2) $ \implies $ (1). We follow the lines of \cite{Str}, where the case of a bounded pseudoconvex domain is handled.

Assume (4): if (3) does not hold, then there exists $\epsilon_0>0$ and a sequence $(u_n)_n$ in  dom\,$(\ovprt) \,\cap$dom\,$(\ovprt_\varphi^*)$ with $\|u_n\|_\varphi =1$ and
$$\|u_n\|_\varphi ^2 \ge \epsilon_0 (\|\ovprt u_n\|_\varphi ^2 +
\|\ovprt_\varphi ^*u_n\|_\varphi ^2) + n\|u_n\|_{-1,\varphi , \nabla \varphi }^2$$
for each $n\ge 1,$ which implies that $u_n \to 0$ in $W^{-1}_{(0,1)}(\mathbb{C}^n, \varphi, \nabla \varphi) $. Since $u_n$ can be written in the form
$$u_n=(\ovprt_\varphi ^* N_{1,\varphi})^* \,\ovprt_\varphi ^*u_n+
(\ovprt_\varphi ^* N_{2,\varphi })\, \ovprt u_n,$$
(4) implies there exists a subsequence of $(u_n)_n$ converging in $L^2_{(0,1)}(\mathbb{C}^n, \varphi )$ and the limit must be $0, $
which contradicts $\|u_n\|_\varphi =1.$ 

To show that (3) implies (2) we consider a bounded sequence in dom\,$(\ovprt )\,\cap$
dom\,($\ovprt_\varphi^*).$ By \ref{basic} this sequence is also bounded in $L^2_{(0,1)}(\mathbb{C}^n, \varphi ).$ Now \ref{rellich} implies that it has a subsequence converging in 
$W^{-1}_{(0,1)}(\mathbb{C}^n, \varphi, \nabla \varphi) .$ Finally use (3) to show that this subsequence is a Cauchy sequence in $L^2_{(0,1)}(\mathbb{C}^n, \varphi ),$
therefore (2) holds.

Assume (2) : by \ref{basic} and the basic facts about $N_{1,\varphi},$ it follows that
$$N_{1,\varphi}: L^2_{(0,1)}(\mathbb{C}^n, \varphi ) \longrightarrow {\text {dom}}\,(\ovprt )\,\cap
{\text {dom}}\,(\ovprt_\varphi^*)$$
is continuous in the graph topology, hence
$$N_{1,\varphi}: L^2_{(0,1)}(\mathbb{C}^n, \varphi ) \longrightarrow {\text {dom}}\,(\ovprt )\,\cap
{\text {dom}}\,(\ovprt_\varphi^*) \hookrightarrow  L^2_{(0,1)}â(\mathbb{C}^n, \varphi )$$
is compact.

\end{proof}

\begin{rem}\label{sobreg}~\\ 
Suppose that the weight function  $\varphi$ is  plurisubharmonic  and that the lowest eigenvalue $\lambda_{\varphi}$ of the Levi - matrix $M_{\varphi }$ satisfies
$$\lim_{|z|\rightarrow \infty}\lambda_\varphi(z)  = +\infty\, . \ \ \ (^{**})$$
This condition implies that $N_{1,\varphi}$ is compact \cite{HaHe}.

It also implies that  the condition of the Rellich - Lemma \ref{rellich} is satisfied.
\end{rem}
This follows from the fact that we have for the trace ${\text {tr}}(M_\varphi ) $ of the Levi - matrix 
$${\text {tr}}(M_\varphi )=\frac{1}{4}\triangle \varphi, $$
and since for any invertible $(n\times n)$-matrix $T$
$$ {\text {tr}}(M_\varphi )={\text {tr}}(TM_\varphi T^{-1}),$$
it follows that ${\text {tr}}(M_\varphi )$ equals the sum of all eigenvalues of $M_\varphi .$ Hence our assumption on the lowest eigenvalue $\lambda_{\varphi}$ of the Levi - matrix implies that the assumption of Proposition \ref {rellich} is satisfied. 

\medskip
In order to use Propostion \ref{compact} to show compactness of $N_\varphi $ we still need 
\begin{prop}
\label{garding}
\textbf{(G\aa rding's inequality) } Let $\Omega$ be a smooth bounded domain. Then for any $u\in W^1(\Omega , \varphi ,\nabla \varphi)$ with compact support in $\Omega$
\begin{equation*}
\Vert u \Vert ^2_{1,\varphi,\nabla\varphi}\leq C(\Omega, \varphi)\left(\Vert\dquer u\Vert^2_\varphi+\Vert\dquer^*_\varphi u\Vert^2_\varphi +\Vert u\Vert^2_\varphi\right) .
\end{equation*}
\end{prop}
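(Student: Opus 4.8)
The plan is to reduce the weighted estimate to the classical (unweighted) Gårding inequality for the $\dquer$-Neumann problem on a smooth bounded domain, by conjugating with the weight factor $e^{-\varphi/2}$. First I would set $v = e^{-\varphi/2}u$; since $u$ has compact support in $\Omega$ and $\varphi$ is $\mathcal C^2$, the map $u\mapsto v$ is an isomorphism between the relevant weighted and unweighted Sobolev spaces over any fixed bounded $\Omega$, with constants depending only on $\Omega$ and on $\sup_\Omega(|\varphi|+|\nabla\varphi|+|\nabla^2\varphi|)$. Under this conjugation the operators $X_j = \partial/\partial x_j - \partial\varphi/\partial x_j$ and $Y_j$ become $e^{-\varphi/2}X_j(e^{\varphi/2}\,\cdot) = \partial/\partial x_j - \tfrac12\partial\varphi/\partial x_j + \dots$, i.e. ordinary derivatives plus zero-order terms bounded on $\Omega$; hence $\Vert u\Vert^2_{1,\varphi,\nabla\varphi}$ is comparable to the ordinary Sobolev norm $\Vert v\Vert^2_{W^1(\Omega)}$ up to $C(\Omega,\varphi)\Vert v\Vert^2_{L^2(\Omega)}$. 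Likewise $e^{-\varphi/2}\dquer(e^{\varphi/2}v)$ and $e^{-\varphi/2}\dquers(e^{\varphi/2}v)$ equal the unweighted $\dquer$ and its unweighted formal adjoint $\vartheta$ acting on $v$, again modulo zero-order terms with coefficients controlled on $\Omega$.

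Next I would invoke the classical elliptic estimate: for a $(0,1)$-form $v$ with coefficients in $\mathcal C_0^\infty(\Omega)$ (or in $W^1$ with compact support, by density using Lemma 2.6 / Friedrichs), one has
$$\Vert v\Vert^2_{W^1(\Omega)} \le C(\Omega)\left(\Vert\dquer v\Vert^2_{L^2(\Omega)} + \Vert\vartheta v\Vert^2_{L^2(\Omega)} + \Vert v\Vert^2_{L^2(\Omega)}\right),$$
which is the standard consequence of the fact that $\dquer\oplus\vartheta$ is elliptic on $(0,1)$-forms (the sum $\sum_{j,k}|\partial u_j/\partial\bar z_k|^2$ controls all first derivatives, exactly as in the computation in the proof of Proposition \ref{basic}, together with the integration-by-parts identity there). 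Actually, since the coefficients have compact support, one can prove this directly by the same commutator manipulation used for Proposition \ref{basic}: expand $\Vert\dquer v\Vert^2 + \Vert\vartheta v\Vert^2$, integrate by parts to convert cross terms into $\sum_{j,k}\Vert\partial v_j/\partial\bar z_k\Vert^2$ plus lower-order remainders, and then note that the full gradient is controlled because for compactly supported functions $\Vert\partial v_j/\partial z_k\Vert = \Vert\partial v_j/\partial\bar z_k\Vert$ after integration by parts up to commutators — no boundary terms appear precisely because $\mathrm{supp}\,v\subset\subset\Omega$.

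Finally I would translate the unweighted estimate back through the conjugation: substitute $v=e^{-\varphi/2}u$, use the comparabilities established in the first step to replace $\Vert v\Vert^2_{W^1(\Omega)}$ by $\Vert u\Vert^2_{1,\varphi,\nabla\varphi}$ (absorbing the zero-order discrepancies into the $\Vert u\Vert^2_\varphi$ term on the right, with a constant depending on $\Omega$ and $\varphi$), and replace $\Vert\dquer v\Vert_{L^2(\Omega)}$, $\Vert\vartheta v\Vert_{L^2(\Omega)}$ by $\Vert\dquer u\Vert_\varphi$, $\Vert\dquers u\Vert_\varphi$ plus terms bounded by $C(\Omega,\varphi)\Vert u\Vert_\varphi$. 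Collecting everything yields the claimed inequality with a constant $C(\Omega,\varphi)$. The main obstacle, and the only place care is needed, is bookkeeping the zero-order terms generated by differentiating $e^{\varphi/2}$: one must check they are all genuinely lower-order (no first derivatives of $u$ with unbounded coefficients survive) so that they can be absorbed into $\Vert u\Vert^2_\varphi$ rather than competing with the $W^1$ norm on the left; this is where boundedness of $\Omega$ is used, since $\nabla\varphi$ and $\nabla^2\varphi$ need only be bounded on $\Omega$, not globally. The compact-support hypothesis is what eliminates boundary integrals throughout and lets us use the interior density of $\mathcal C_0^\infty(\Omega)$-forms.
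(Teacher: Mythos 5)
Your proposal is correct and takes essentially the same route as the paper: both arguments reduce the claim to the classical (unweighted) G\aa rding inequality for the elliptic system $\dquer\oplus\dquer^*_\varphi$ on the bounded domain, using that $\varphi$ and its derivatives are bounded on $\overline\Omega$ so that all discrepancies between the weighted and unweighted settings are zero-order and absorbable into $\Vert u\Vert^2_\varphi$. The only difference is packaging --- you conjugate by $e^{-\varphi/2}$ to pass to genuinely unweighted operators before invoking the elliptic estimate, whereas the paper applies the classical inequality directly to the weighted operators in unweighted norms and then uses equivalence of the norms on $\Omega$; your added sketch of a direct proof of the interior estimate via the commutator identity from Proposition \ref{basic} is a nice self-contained alternative to the paper's appeal to general PDE theory.
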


\begin{proof}The operator $-\boxphi$ is strictly elliptic since its principal part equals the Laplacian. Now $-\boxphi=-(\dquer \oplus\dquers)^*\circ(\dquer \oplus\dquers)$, so from general PDE theory follows that the system $\dquer \oplus\dquers$ is elliptic. This is, because a differential operator $P$ of order $s$ is elliptic if and only if $(-1)^sP^*\circ P$ is strictly elliptic. So because of ellipticity, one has on each smooth bounded domain $\Omega$ the classical G\aa rding inequality
\begin{equation*}
\Vert u \Vert ^2_{1}\leq C(\Omega)\left(\Vert\dquer u\Vert^2+\Vert\dquer^*_\varphi u\Vert^2 +\Vert u\Vert^2\right) 
\end{equation*}
for any (0,1)-form $u$ with coefficients in $\mathcal C^\infty_0$. But our weight $\varphi$ is smooth on $\overline \Omega$, hence the weighted and unweighted $L^2$-norms on $\Omega$ are equivalent, and therefore

$$\Vert u \Vert ^2_{1, \varphi , \nabla \varphi} \le C_1 (\Vert u \Vert ^2_{1,\varphi}+\Vert u \Vert ^2_\varphi )
\le C_2  (\Vert u \Vert ^2_1+\Vert u \Vert ^2)$$
$$\le C_3 (\Vert\dquer u\Vert^2+\Vert\dquer^*_\varphi u\Vert^2 +\Vert u\Vert^2)\\
\le C_4 (\Vert\dquer u\Vert^2_\varphi+\Vert\dquer^*_\varphi u\Vert^2_\varphi +\Vert u\Vert^2_\varphi ). $$

\end{proof}

We are now able to give a different proof of the main result in \cite{HaHe}.
\begin{theorem}
\label{cp 1}
Let $\varphi$ be plurisubharmonic. If the lowest eigenvalue $\lambda_\varphi (z)$ of the Levi - matrix $M_\varphi$ satisfies $(^{**})$, then $N_\varphi$ is compact.
\end{theorem}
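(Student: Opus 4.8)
The plan is to verify condition (3) of Proposition \ref{compact}, i.e. the compactness estimate
$$\|u\|_\varphi^2 \le \epsilon(\|\ovprt u\|_\varphi^2 + \|\ovprt_\varphi^* u\|_\varphi^2) + C_\epsilon \|u\|_{-1,\varphi,\nabla\varphi}^2$$
for all $u\in\mathrm{dom}(\ovprt)\cap\mathrm{dom}(\ovprt_\varphi^*)$, since by that Proposition this is equivalent to compactness of $N_\varphi$. By Remark \ref{sobreg} the hypothesis $(^{**})$ on the lowest eigenvalue $\lambda_\varphi$ guarantees both $(^*)$ (so the basic estimate \ref{basic} holds, after passing to an equivalent weight via \ref{glattheit}) and the hypothesis of the Rellich Lemma \ref{rellich}. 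Moreover by density (Lemma \ref{density}) it suffices to prove the estimate for $u$ with coefficients in $\mathcal{C}_0^\infty(\mathbb{C}^n)$.

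The core of the argument is to run the computation in the proof of Proposition \ref{basic} but keep track of a localization. Fix a large radius $R$ and write $u = \chi_R u + (1-\chi_R)u$ where $\chi_R$ is a cutoff as in Lemma \ref{density}. On the region $|z|\ge R$, the commutator identity $[\delta_j, \partial/\partial\overline z_k] = \partial^2\varphi/\partial z_j\partial\overline z_k$ together with $(^{**})$ gives
$$\int_{|z|\ge R}\lambda_\varphi(z)|u(z)|^2 e^{-\varphi}\,d\lambda \le \|\ovprt u\|_\varphi^2 + \|\ovprt_\varphi^* u\|_\varphi^2,$$
so the tail $\int_{|z|\ge R}|u|^2 e^{-\varphi}\,d\lambda$ is bounded by $(\inf_{|z|\ge R}\lambda_\varphi)^{-1}(\|\ovprt u\|_\varphi^2 + \|\ovprt_\varphi^* u\|_\varphi^2)$, which can be made $\le \epsilon(\|\ovprt u\|_\varphi^2 + \|\ovprt_\varphi^* u\|_\varphi^2)$ by choosing $R$ large. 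It then remains to dominate the compactly supported piece $\int_{|z|<R}|u|^2 e^{-\varphi}\,d\lambda$ by $\epsilon(\cdots) + C_\epsilon\|u\|_{-1,\varphi,\nabla\varphi}^2$; for this I would invoke Gårding's inequality \ref{garding} on a fixed bounded smooth domain $\Omega\supset \overline{\mathbb{B}_R}$ to control the full $W^1(\Omega,\varphi,\nabla\varphi)$-norm of $u$ by $\|\ovprt u\|_\varphi^2 + \|\ovprt_\varphi^* u\|_\varphi^2 + \|u\|_\varphi^2$, and then apply the standard interpolation / Ehrling-type inequality $\|u\|_{L^2}^2 \le \epsilon\|u\|_{W^1}^2 + C_\epsilon\|u\|_{W^{-1}}^2$ between $W^1$, $L^2$ and $W^{-1}$ to absorb the $W^1$-term and the $\|u\|_\varphi^2$-term on the left. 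The cross terms coming from derivatives of $\chi_R$ are lower order and are handled by the same absorption.

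The main obstacle I anticipate is the bookkeeping needed to pass between the unweighted local Sobolev estimates (Gårding, Rellich on $\mathbb{B}_R$, the Ehrling inequality) and the global weighted norms $\|\cdot\|_{\varphi,\nabla\varphi}$ and $\|\cdot\|_{-1,\varphi,\nabla\varphi}$ in a way that keeps the constant in front of the $\|\ovprt u\|_\varphi^2 + \|\ovprt_\varphi^* u\|_\varphi^2$ term genuinely small while letting $C_\epsilon$ depend on $R=R(\epsilon)$. On a fixed ball the weight $\varphi$ and its gradient are bounded, so weighted and unweighted norms of all orders are equivalent there with constants depending only on $R$; this makes the localized steps legitimate, but it has to be stated carefully. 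A cleaner alternative, which I would actually prefer to write, is to bypass the explicit localization: once $(^*)$ and the Rellich hypothesis hold, Proposition \ref{compact} already asserts the equivalence of (1), (2), (3), (4), so it is enough to establish just estimate (3), and the tail estimate above combined with Gårding plus Ehrling on a fixed large ball delivers exactly (3). I expect the tail estimate to be immediate from the proof of \ref{basic}, so the only real work is the fixed-ball interpolation argument, and the whole theorem then follows by quoting \ref{compact}.
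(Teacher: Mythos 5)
Your proposal is correct and follows essentially the same route as the paper's proof: split off the tail $|z|\ge R$ using the basic identity $\int \sum_{j,k}\varphi_{j\overline k}u_j\overline u_k\,e^{-\varphi}d\lambda \le Q_\varphi(u,u)$ together with $\lambda_\varphi>M$ there, and handle the piece near $\mathbb{B}_R$ via G\aa rding's inequality \ref{garding} applied to $\chi u$ plus the duality estimate $\langle \chi u,u\rangle_\varphi\le \|\chi u\|_{1,\varphi,\nabla\varphi}\|u\|_{-1,\varphi,\nabla\varphi}\le a\|\chi u\|_{1,\varphi,\nabla\varphi}^2+a^{-1}\|u\|_{-1,\varphi,\nabla\varphi}^2$, which is exactly your ``Ehrling-type'' step, before absorbing the remaining $\|u\|_\varphi^2$ via Proposition \ref{basic} and quoting Proposition \ref{compact}(3).
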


\begin{proof} By  Proposition \ref{rellich} and Remark \ref{sobreg}, it suffices to show a compactness estimate and use Proposition \ref{compact}. Given $\epsilon>0$ we choose  $M\in\mathbb N$ with $1/M \le \epsilon/2$ and $R$ such that $\lambda(z)>M$ whenever $|z|>R$. Let $\chi$ be a smooth cutoff function identically one on $\mathbb{B}_R$. Hence we can estimate
\begin{align*}
M\Vert f\Vert_\varphi^2\leq& \sum_{j,k} \int\limits_{\mathbb C^n\setminus \mathbb{B}_R}\levim f_j\overline f_k e^{-\varphi}\,d\lambda +M\Vert\chi f\Vert_\varphi^2\\
\leq&Q_\varphi (f,f)+M\langle\chi f,f\rangle_\varphi\\
\leq&Q_\varphi (f,f)+M\Vert\chi f\Vert_{1,\varphi,\nabla\varphi}\Vert f\Vert_{-1,\varphi,\nabla\varphi}\\
\leq&Q_\varphi (f,f)+Ma\Vert\chi f\Vert_{1,\varphi,\nabla\varphi}^2+a^{-1}M\Vert f\Vert_{-1,\varphi,\nabla\varphi}^2 ,
\end{align*}
where $a$ is to be chosen a bit later. Now we apply  G\aa rding's inequality \ref{garding} to the second  term, so there is a constant $C_R$ depending on $R$ such that 
$$M\Vert f\Vert_\varphi^2\leq Q_\varphi (f,f)+MaC_R\left(Q_\varphi (f,f)+\Vert f\Vert_\varphi^2\right)+a^{-1}M\Vert f\Vert_{-1,\varphi,\nabla\varphi}^2.$$
By Proposition \ref{basic} and after increasing $C_R$ we have
$$M\Vert f\Vert_\varphi^2\leq Q_\varphi (f,f)+MaC_RQ_\varphi (f,f)+a^{-1}M\Vert f\Vert_{-1,\varphi,\nabla\varphi}^2.$$
Now choose $a$ such that $aC_R \le \epsilon/2,$ then
\begin{equation*}
\Vert f\Vert_\varphi^2\leq\epsilon Q_\varphi (f,f)+a^{-1}\Vert f\Vert_{-1,\varphi,\nabla\varphi}^2
\end{equation*}
and this estimate is equivalent to compactness by \ref{compact}.
\end{proof}
\vskip 0.5 cm

\begin{rem}
Assumption $(^{**})$ on the lowest eigenvalue of $M_\varphi$ is the analog of property (P) introduced by Catlin in \cite{Ca} in case of bounded pseudoconvex domains. Therefore the proof is similar. 
\end{rem}

\begin{rem} We mention that for the weight $\varphi (z)=|z|^2$ the $\dquer $-Neumann operator fails to be compact (see \cite{HaHe}), but the condition 
$$\lim_{|z|\to \infty}(\theta |\nabla \varphi (z)|^2+\triangle \varphi (z))= +\infty $$
 of the Rellich - Lemma is satisfied.
\end{rem}

\begin{rem}
\label{reg}
Denote by $W^m_{loc}(\mathbb C^n)$ the space of functions which locally belong to the classical unweighted Sobolev space $W^m(\mathbb C^n)$.
Suppose that $\boxphi v=g$ and $g\in W_{loc \, (0,1)}^m(\mathbb C^n)$. Then $v\in W^{m+2}_{loc \, (0,1)}(\mathbb C^n)$. In particular, if there exists a weighted $\dquer$-Neumann operator $N_\varphi$, it  maps $\mathcal{C}_{(0,1)}^\infty(\mathbb C^n)\cap L^2_{(0,1)}(\mathbb C^n,\varphi)$ into itself.
\end{rem}

$\boxphi$ is strictly elliptic, and the statement in fact follows from interior regularity of a general second order elliptic operator. The reader can find more on elliptic regularity for instance in \cite{Ev}, chapter 6.3.\\
An analog statement is true for $\mathcal S_\varphi$. If there exists a continuous canonical solution operator $\mathcal S_\varphi$, it maps $\mathcal{C}_{(0,1)}^\infty(\mathbb C^n)\cap L^2_{(0,1)}(\mathbb C^n,\varphi)$ into itself. This follows from ellipticity of $\dquer$.
\vskip 0.5 cm

Although $\boxphi$ is strictly elliptic, the question whether $\mathcal S_\varphi$ is globally or exactly regular is harder to answer. This is, because our domain is not bounded and neither are the coefficients of $\boxphi$. Only in a very special case the question is easy - this is, when $ A_\varphi^2$ (the weighted space of entire functions) is zero. In this case, there is only one solution operator to $\dquer$, namely the canonical one, and if $f\in W^k_{\varphi\, (0,1)}$  and $u=\mathcal S_\varphi f$, it follows that $\dquer D^\alpha u=D^\alpha f$, since $\dquer$ commutes with $\frac{\partial}{\partial x_j}$. Now $\mathcal S_\varphi$ is continuous,  so $\Vert D^\alpha u\Vert_\varphi\le C\Vert D^\alpha f\Vert_\varphi$, meaning that $u\in W^k_\varphi$. So in this case $\mathcal S_\varphi$ is a bounded operator from $W^k_{\varphi \, (0,1)}\to W^k_\varphi$.
\medskip

\begin{rem}
Let $A^2_{(0,1)}(\mathbb{C}^n, \varphi)$ denote the space of $(0,1)$-forms with holomorphic coefficients belonging to $L^2(\mathbb{C}^n. \varphi ).$

We point out that assuming $(^{**})$ implies directly -- without use of Sobolev spaces -- that the embedding of the space
$$  A^2_{(0,1)}(\mathbb{C}^n, \varphi)\cap dom \,(\dquers )$$ 
provided with the graph norm $u\mapsto (\|u\|^2_\varphi +  
\|\ovprt_\varphi ^*u\|^2_\varphi)^{1/2}$ into $A^2_{(0,1)}(\mathbb{C}^n, \varphi)$ is compact. Compare \ref{compact} (2).
\end{rem}

\vskip 0.3 cm

For this purpose let $u\in A^2_{(0,1)}(\mathbb{C}^n, \varphi)\cap dom \,(\dquers ).$
Then we obtain from the proof of \ref{basic} that
$$\| \dquers u \|^2_\varphi = \int\limits_{\mathbb{C}^n} \sum_{j,k=1}^n\frac{\partial^2 \varphi}{\partial z_j \partial\overline{z}_k}\,u_j\overline{u}_k\, e^{-\varphi}\,d\lambda .$$
Let us for  $u=\sum_{j=1}^n u_j \, d\overline z_j$ indentify $u(z)$ with the vector $(u_1(z),\dots ,u_n(z))\in\mathbb C^n$. Then, if we denote by $\langle.,.\rangle$ the standard inner product in $\mathbb C^n$, we have
$$\langle u(z),u(z)\rangle=\sum_{j=1}^n |u_j(z)|^2 \ {\text{and}} \ 
\langle M_\varphi u(z),u(z) \rangle =  \sum_{j,k=1}^n\frac{\partial^2 \varphi (z)}{\partial z_j \partial\overline{z}_k}\,u_j(z)\overline{u_k(z)} .$$
\vskip 0.3 cm
Note that the lowest eigenvalue $\lambda_\varphi$ of the Levi - matrix $M_\varphi$ can be expressed as 
$$\lambda_\varphi(z) = \inf_{u(z)\neq 0}\frac{\langle M_\varphi u(z),u(z) \rangle}{\langle u(z),u(z) \rangle}.$$
So we get
\begin{align*}
\int_{\mathbb{C}^n}\langle u,u \rangle e^{-\varphi }\,d\lambda \leq&
\int_{\mathbb{B}_R } \langle u,u \rangle e^{-\varphi }\,d\lambda  +[\inf_{\mathbb{C}^n \setminus \mathbb{B}_R} \lambda_\varphi (z)]^{-1} \, \int_{\mathbb{C}^n \setminus \mathbb{B}_R}  \lambda_\varphi (z) \, \langle u,u \rangle e^{-\varphi }\,d\lambda\\
\leq&  \int_{\mathbb{B}_R } \langle u,u \rangle e^{-\varphi }\,d\lambda +
[\inf_{\mathbb{C}^n \setminus \mathbb{B}_R} \lambda_\varphi (z)]^{-1} \, \int_{\mathbb{C}^n}  \langle M_\varphi u,u \rangle e^{-\varphi }\,d\lambda .
\end{align*}

For a given $\epsilon >0 $ choose $R$ so large that
$$[\inf_{\mathbb{C}^n \setminus \mathbb{B}_R} \lambda_\varphi (z)]^{-1}< \epsilon,$$
and use the fact that for Bergman spaces of holomorphic functions the embedding of $A^2(\mathbb{B}_{R_1})$ into $A^2(\mathbb{B}_{R_2})$ is compact for $R_2 <R_1.$ So the desired conclusion follows.

\begin{rem}
Part of the results, in particular Theorem \ref{cp 1}, are taken from \cite{Ga}. We finally mention that the methods used in this paper can also be applied to treat unbounded pseudoconvex domains with boundary, see \cite{Ga}.
\end{rem}

\textbf{Acknowledgement. }The authors thank the referee for two corrections in the bibliography and a remark increasing the readability of the paper. Moreover they are thankful to Emil Straube for pointing out an inaccuracy in an earlier version of the paper and many helpful questions and suggestions.


\begin{thebibliography}{ABCD}



\bibitem[B]{B} B. Berndtsson, {\em $\ovprt$ and Schr\"odinger operators},
Math. Z. {\bf 221} (1996), 401--413.
\bibitem[BS]{BS} H.P. Boas and E.J. Straube, {\em Global regularity of
the $\ovprt $-Neumann problem: a survey of the $L^2$-Sobolev theory,}
Several Complex Variables (M. Schneider and Y.-T. Siu, eds.) MSRI
Publications, vol. 37, Cambridge University Press, 1999, pg. 79--111.

\bibitem[BDH]{BDH} P. Bolley, M. Dauge and B. Helffer,{\em Conditions suffisantes pour l'injection compacte d'espace de Sobolev \`a poids,} S\'eminaire \'equation aux d\'eriv\'ees partielles (France), vol.1, Universit\'e de Nantes (1989), 1--14.
\bibitem[Ca]{Ca} D.W. Catlin, Global regularity of the $\dquer$-Neumann problem, Proc. Symp. Pure Math. \textbf{41} (1984), 39-49
\bibitem[CD]{CD}  D.  Catlin  and  J. D'Angelo, {\em Positivity
conditions  for  bihomogeneous  polynomials},  Math. Res. Lett.
{\bf 4} (1997), 555--567.
\bibitem[Ch]{Ch} M. Christ, {\em On the $\ovprt $ equation in weighted $L^2$ norms in $\mathbb C^1$},
J. of Geometric Analysis {\bf 1} (1991), 193--230.
\bibitem[ChF]{ChF} M. Christ and S. Fu, {\em Compactness in the $\ovprt$-Neumann problem, magnetic Schr\"odinger operators, and the Aharonov-Bohm effect,} Adv. in Math. {\bf 197} (2005), 1--40.
\bibitem[ChSh]{ChSh} So-Chin Chen and Mei-Chi Shaw, {\em Partial differential equations in several complex variables}, Studies in Advanced Mathematics, Vol.~19, Amer. Math. Soc. 2001.
\bibitem[Ev]{Ev}L. C. Evans, Partial Differential Equations, Graduate Studies in Mathematics, \textbf{19}, American Mathematical Society, 1998.
\bibitem[FS1]{FS1} S. Fu and E.J. Straube,
{\em  Compactness of the $\ovprt-$Neumann problem   on  convex  domains},  J.  of
Functional Analysis {\bf 159} (1998), 629--641.
\bibitem[FS2]{FS2}  S. Fu and E.J. Straube, {\em Compactness in the $\ovprt -$Neumann problem},
Complex Analysis and Geometry (J.McNeal, ed.), Ohio State Math. Res. Inst. Publ. {\bf 9} (2001),
141--160.
\bibitem[FS3]{FS3} S. Fu and E.J. Straube, {\em Semi-classical analysis of Schr\"odinger operators
and compactness in the $\ovprt $ Neumann problem}, J. Math. Anal. Appl. {\bf 271} (2002), 267-282.

\bibitem[Ga]{Ga} K. Gansberger, Compactness of the $\dquer$-Neumann operator, Dissertation, University of Vienna, 2009.
\bibitem[HK]{HK} P. Hajlasz and P. Koskela, {\em Sobolev met Poincar{\'e},}
Memoirs AMS {\bf 688} (2000).
\bibitem[HaHe]{HaHe} F. Haslinger and B. Helffer, {\em Compactness of the solution operator to $\ovprt $ in weighted $L^ 2$ - spaces}, J. of Functional Analysis,  {\bf 243} (2007), 679-697.

\bibitem[Hei]{Hei} J. Heinonen, {\em Nonsmooth calculus}, Bull. AMS {\bf 44} (2007), 163--232. 
\bibitem[HeNi]{HeNi} B. Helffer and F. Nier, {\em Criteria to the Poincar\'e inequality associated with Dirichlet forms in $\mathbb R^d \, , d\ge 2$,} Int. Math. Res. Notices {\bf 22} (2003), 1199--1223. 

\bibitem[HI]{HI} G. Henkin and A. Iordan, {\em
Compactness of the $\ovprt $-Neumann operator for hyperconvex domains
with non-smooth B-regular boundary,} Math. Ann. {\bf 307} (1997),
151--168.

\bibitem[H]{H} L. H\"ormander, {\em An introduction to complex analysis in several variables,} North Holland,
Amsterdam etc., 1990.

\bibitem[Jo] {Jo} J.~Johnsen,
{\em  On the spectral properties of Witten Laplacians, their range
  projections
 and Brascamp-Lieb's inequality}, Integral Equations Operator Theory
{\bf 36}  (3), 2000, 288--324.

\bibitem[KM]{KM} J.-M. Kneib and F. Mignot, {\em Equation de Schmoluchowski g\'en\'eralis\'ee,} Ann. Math. Pura Appl. (IV) {\bf 167} (1994), 257--298.
\bibitem[KN]{KN} J. Kohn and L. Nirenberg, {\em Non-coercive
boundary value problems,} Comm. Pure and Appl. Math. {\bf 18} (1965),
443--492.


\bibitem[SSU]{SSU} N. Salinas, A. Sheu and H. Upmeier,
{\em Toeplitz operators on pseudoconvex domains and foliation $C^{\ast}-$  algebras},
Ann. of Math. {\bf 130 } (1989), 531--565.
\bibitem[Str]{Str} E. Straube, {\em The $L^2$-Sobolev theory of the $\ovprt $-Neumann problem,} ESI Lectures in Mathematics and Physics, EMS (to appear).

\bibitem[T]{T} F. Treves, {\em Basic Linear Partial Differential,} Dover Books on Mathematics,  Dover Publications, Inc., Mineola, NY, 2006.
\end{thebibliography}
\end{document}